\documentclass[11pt,twoside]{amsart}
\usepackage{amsfonts,amssymb,amsmath}
\usepackage{xcolor}
\usepackage[pagebackref]{hyperref}
\usepackage{float}
\makeatletter  
\@namedef{subjclassname@2020}{%
\textup{2020} Mathematics Subject Classification}
\makeatother

\numberwithin{equation}{section}
\newtheorem{theorem}{Theorem}[section]
\newtheorem{lemma}[theorem]{Lemma}
\newtheorem{proposition}[theorem]{Proposition}

\newtheorem{corollary}[theorem]{Corollary}

\theoremstyle{definition}
\newtheorem{definition}[theorem]{Definition}
\newtheorem{remark}[theorem]{Remark}
\newtheorem{example}[theorem]{Example}
\newtheorem{procedure}[theorem]{Procedure}

\def\M{\mathcal{M}}
\def\X{\mathcal{X}}

\begin{document}

\title[On the regularity index of the minimum distance function]{On
the regularity index of the minimum distance function in projective  
nested Cartesian codes}

\thanks{The first author was partially supported by grants from CNPq (PQ 
308708/2023-7)
and FAPEMIG (APQ-01430-24). The second author
was partially supported by the Center for Mathematical Analysis, Geometry and
Dynamical Systems of Instituto Superior T\'ecnico, Universidade de
Lisboa. The third author was partially supported by SNII, M\'exico.}

\author[C. Carvalho]{C\'{\i}cero  Carvalho}
\address{
Faculdade de Matem\'atica \\ Universidade Federal de Uberl\^andia 
Av.\ J.\ N.\ \'Avila 2121, 38.408-902 - Uberl\^andia - MG, Brazil.
}
\email{cicero@ufu.br}

\author[M. Vaz Pinto]{Maria Vaz Pinto}
\address{Departamento de Matem\'atica, Instituto Superior T\'ecnico,
Universidade de Lisboa, Avenida Rovisco Pais, 1, 1049-001 Lisboa,
Portugal.
}
\email{vazpinto@math.tecnico.ulisboa.pt}

\author[R. H. Villarreal]{Rafael H. Villarreal}
\address{
Departamento de
Matem\'aticas\\
Cinvestav, Av. IPN 2508, 07360, CDMX, M\'exico.
}
\email{rvillarreal@cinvestav.mx}

\keywords{Nested Cartesian code, regularity index, minimum distance
function, Reed--Muller-type code, indicator function, footprint,
v-number, finite field}
\subjclass[2020]{Primary 13P25; Secondary 14G50, 94B27}

\begin{abstract}
Let $\X$ be a projective nested product of fields 
and let $\delta_\X(d)$ be the minimum distance in degree $d\geq 1$ of
the projective nested Cartesian code $C_\X(d)$. The regularity index 
${\rm reg}(\delta_\X)$ of the minimum distance function $\delta_\X$
is the minimum 
integer $d_0\geq 0$ 
such that $\delta_\X(d)=1$ for $d\geq d_0$. We give a 
formula for ${\rm reg}(\delta_\X)$ by determining 
an indicator function of least degree for each point of $\X$ and using
the fact that ${\rm reg}(\delta_\X)$ is the ${\rm v}$-number of the
vanishing ideal $I_\X$ of $\X$. Then we give an
arithmetical criterion that characterizes when $\X$ is
Cayley--Bacharach. 
\end{abstract}
  
\maketitle
\section{Introduction}\label{section-intro}
Let $S:=K[t_0,\ldots,t_n]=\bigoplus_{d=0}^\infty S_d$ be a 
polynomial ring with the standard grading over a finite field
$K=\mathbb{F}_q$ with $q$ elements, let $\prec$ be the 
graded lexicographic order $\prec$ on $S$, where $t_0 \prec \cdots
\prec t_n$, and let $\mathbb{P}^{n}$ be the projective space over the field 
$K$.
Monomials of $S$ are abbreviated
 by $t^\alpha:=t_0^{\alpha_0}\cdots t_n^{\alpha_n}$, for 
 $\alpha=(\alpha_0,\dots,\alpha_n)$ in
 $\mathbb{N}^{n+1}$.

Given a tower $K_0\subset\cdots\subset K_n$ of subfields of $K$, 
let $\X$ be the {\it projective nested product of fields\/}
\begin{eqnarray*}
\mathcal{X}:=\left[K_0\times\cdots\times 
K_n\right]:=\left\{(a_0:\cdots :a_n) \in \mathbb{P}^{n} : a_i\in K_i
\mbox{ for all } i\right\}, 
\end{eqnarray*}
that was introduced in \cite{cnl2017}, and let $d_i$ be the
cardinality of $K_i$. 
Denote by $P_1,\ldots,P_m$ the points of $\mathcal{X}$ written in standard 
representation, that is, for each $P_i$,  
the first nonzero entry from the left is equal to $1$. 
The  evaluation map given by   
\begin{equation*}
\varphi_d\colon S_{d}\longrightarrow K^{|\mathcal{X}|},\ \ \ \ \ 
f\mapsto \left(f(P_1),\ldots,f(P_m)\right),
\end{equation*}
is a linear map of 
$K$-vector spaces. The image of $\varphi_d$, denoted by $C_{\mathcal{X}}(d)$,
defines a linear code that is called a \textit{projective nested Cartesian code} of
degree $d$. This family of codes, introduced and studied by
Carvalho, Neumann and L\'{o}pez \cite{cnl2017}, generalizes the
classical projective Reed--Muller codes studied by S{\o}rensen 
\cite{sorensen} that are obtained by letting
$K_i=\mathbb{F}_q$ for all $i$.  

The \textit{minimum distance} of $C_\X(d)$, denoted $\delta_\X(d)$, is
given by 
$$\delta_\X(d):=\min\{\|v\|
\colon 0\neq v\in C_\X(d)\},$$
where $\|v\|$ is the number of non-zero
entries of $v$. The asymptotic behavior of this function 
is known and according to \cite[Proposition~2.15]{hilbert-min-dis},
there is an 
integer $d_0\geq 0$ such that
$$
|\X|=\delta_\X(0)>\delta_\X(1)>\delta_\X(2)>
\cdots>\delta_\X(d_0)=\delta_\X(d)=1\ \mbox{ for all
}\ d\geq d_0. 
$$
\quad The \textit{regularity index} of 
the \textit{minimum distance function} $\delta_\X$, $d\mapsto\delta_\X(d)$,
$d\geq 0$, denoted ${\rm reg}(\delta_\X)$, is given by
$$
{\rm reg}(\delta_\X):=\min\{d\geq 0 :
\delta_\X(d)=1\}.
$$
\quad Potentially good codes,  
capable of correcting errors in the transmission of
information, should have minimum distance greater than $1$
because the code $C_\X(d)$ can correct up to $t$ errors, where
$$
t=\left\lfloor\frac{\delta_\X(d)-1}{2}\right \rfloor,
$$ 
see \cite[pp.~40--41]{Huffman-Pless}, \cite{MacWilliams-Sloane}.
This is a reason for looking for 
algorithms and formulas to compute the regularity index of 
$\delta_\X$.     

We use indicator functions to determine the
regularity index of 
$\delta_\X$. An indicator function for a point $P\in\X$ is a
homogeneous polynomial 
$f\in S$ such that 
$f(P) \neq 0$ and $f(Q) = 0$ for all $Q \in \X \setminus \{ P \}$. 
These functions are used in coding
theory \cite{min-dis-generalized,dual,sorensen}, Cayley--Bacharach 
schemes \cite{geramita-cayley-bacharach,Guardo-Marino-Van-Tuyl,tohaneanu-vantuyl}, 
and interpolation problems \cite{cocoa-book}. An indicator function of
$P\in\X$ can be computed using \cite[Corollary~6.3.11]{cocoa-book}, see also
\cite{Ceria-etal} and references therein.

Let $I_\X$ be the graded \textit{vanishing ideal} of
$\X$ generated by the homogeneous polynomials in $S$ that
vanish at all points of $\X$. For each point $P$ of $\X$, let 
$\mathfrak{p}=I_P$ be the vanishing ideal of $P$, and let 
${\rm Ass}(I_\X)=\{I_P : P\in \X\}$ be the 
set of \textit{associated primes} of 
$I_\X$. Then, $I_\X=\bigcap_{P\in \X}I_P$ is the primary
decomposition of $I_\X$. 

Following \cite{min-dis-generalized}, we define the v-number of $I_\X$ locally at each point
$P$ of $\X$\/ as:
\begin{equation*}
{\rm v}_{P}(I_\X):=\mbox{min}\{d\geq 0 : \exists\, f\in S_d
\mbox{ with }(I_\X\colon f)=I_P\},
\end{equation*}
where $(I_\X\colon f):=\{g\in S\, :  gf\subset I_\X \}$ is a
\textit{colon ideal}. The least degree of an indicator function of $P$
is ${\rm v}_{P}(I_\X)$ \cite[Lemma~3.2(b)]{villa2024}, and this number can be computed using 
\textit{Macaulay}$2$ \cite{mac2} and the following algebraic
description of the v-number of $I_\X$ at $P$ 
\cite[Proposition~4.2]{min-dis-generalized}:
\begin{equation*}
{\rm
v}_{P}(I_\X)=\alpha\!\left((I_\X\colon I_P)/{I_\X}\right),
\end{equation*}
where $\alpha\!\left((I_\X\colon I_P)/{I_\X}\right)$ is the 
minimum degree of the non-zero elements of
$(I_\X\colon I_P)/{I_\X}$. 
Local v-numbers appear in the work of Geramita, Kreuzer and Robbiano.
More precisely the degree $\deg_\X(P)$ of
a point $P$, in the
sense of \cite[Definition~2.1]{geramita-cayley-bacharach}, is equal to
${\rm v}_{P}(I_\X)$. 

The monomial order $\prec$ we choose comes into play  by considering 
the \textit{footprint} $\Delta(I_\X)$ of the ideal $I_\X$ which is the set of monomials 
$t^\alpha$ of $S$ which are not a leading monomial of any non-zero
polynomials in $I_\X$. The monomials of $\Delta(I_\X)$ are the
\textit{standard monomials} of $I_\X$ and an indicator function $f$ of
a point $P\in \X$ is called \textit{standard} if all the monomials that appear
in $f$ are standard.

For each $P\in\X$ there exists a unique, up to multiplication by a scalar from
$K^*$, standard indicator function $f$ of $P$ of degree ${\rm
v}_P(I_\X)$, and $(I_X\colon I_P)/I_\X$ is a principal ideal of
$S/I_\X$ generated by $\overline{f}=f+I_\X$ \cite[Proposition~3.4]{villa2024}. This gives an algebraic
method to compute $f$ \cite[Procedure~A.1]{villa2024} using
\textit{Macaulay}$2$  \cite{mac2}. 
In
Proposition~\ref{ind-funct}, we give an explicit formula for an
indicator function $f$ of $P$ and we show in Theorems~\ref{p-ej} and
\ref{cicero-maria-vila-ei} that $f$ is of degree ${\rm v}_P(I_X)$,
$f + I_\X$ generates $(I_X\colon I_P)/I_\X$ but $f$ is not necessarily standard. 

The v-{\em number} of $I_\X$, denoted ${\rm
v}(I_\X)$, is the following algebraic invariant of
$I_\X$ that was introduced by Cooper et.\ al.\
\cite{min-dis-generalized} to study the asymptotic
behavior of $\delta_\X$:
\begin{equation*}
{\rm v}(I_\X):=\min\{d\geq 0  : \exists\, f 
\in S_d \mbox{ and }\mathfrak{p} \in {\rm Ass}(I_\X) \mbox{ with } 
(I_\X\colon f)
=\mathfrak{p}\}.
\end{equation*}
\quad The v-number of    
$I_\X$ is the least degree among all indicator function
of the points of $\X$. By \cite[Corollary~5.6]{min-dis-generalized},
we have 
$$
{\rm v}(I_\X)={\rm reg}(\delta_\X),
$$
that is, $\delta_\X(d)=1$ if and only if $d\geq {\rm v}(I_\X)$. 

There is another fundamental invariant related to indicator functions
and v-numbers. The {\it Hilbert function} of $S/I$, $I=I_\X$, is given
by: 
$$
H_\X(d):=\dim_K(S_d/I_d),\ \ \ d=0,1,2,\ldots,
$$
where $I_d=I\cap S_d$. The dimension of $C_\X(d)$ is equal to
$H_\X(d)$ for all $d\geq 0$. There is an integer $r\geq 0$ such that  
$$
1=H_\X(0)<H_\X(1)<\cdots<H_\X(r-1)<H_\X(d)=|\X|\
\mbox{ for all }d\geq r,
$$ 
$r={\rm reg}(H_\X)$ is the \textit{regularity index} of $S/I_\X$, $r$
is also ${\rm reg}(S/I_\X)$, the Castelnuovo--Mumford regularity of
$S/I_\X$, and $r$ is also the degree of the $h^*$-polynomial 
of $S/I(\X)$, see \cite[Remark~1.1]{geramita-cayley-bacharach} and 
\cite[Proposition~9.3.13]{monalg-3rd-edition}. The \textit{degree} or
\textit{multiplicity} of $S/I_\X$ is $|\X|$
\cite{geramita-cayley-bacharach,hilbert-min-dis}. A formula for $|\X|$
is given in \cite[Theorem~2.8]{cnl2017}, see Eq.~\eqref{apr19-26}.
These invariants are related as follows:
$$
\min\{{\rm v}_P(I_\X) : P\in \X\}=
{\rm v}(I_\X)={\rm reg}(\delta_\X)\leq {\rm reg}(H_\X)=\max\{{\rm
v}_P(I_\X) : P\in \X\}=\sum_{i = 1 }^n (d_i - 1) + 1. 
$$
\quad The last two equalities follow from
\cite[Lemma~3.2(d)]{villa2024} and
Lemma~\ref{lemma1.1}. The inequality follows from 
Lemma~\ref{lemma1.2}. There are examples where the inequality is 
strict (Example~\ref{ex1}).

Two of the main results of this paper show that for each point $P\in\X$, we
can give an explicit formula of an indicator function $f$ of $P$ of
degree ${\rm v}_{P}(I_\X)$ and a formula for ${\rm v}_{P}(I_\X)$ 
(Theorems~\ref{p-ej} and
\ref{cicero-maria-vila-ei}). To prove these results we use standard
indicator functions and a result of Carvalho \cite{car-2013} that allows us to
prove a vanishing criterion (Lemma~\ref{zero-function}) that is used
to show Theorem~\ref{cicero-maria-vila-ei}.

We come to our third main result.

\noindent \textbf{Theorem~\ref{cicero-maria-vila-reg}}\textit{ 
Let $\X=[K_0\times\cdots\times K_n]\subset\mathbb{P}^n$ be a
projective nested product of fields, let $d_i=|K_i|$, and let 
${\rm reg}(\delta_\X)$ be the regularity index of $\delta_X$. The following
hold.
\begin{enumerate}
\item[\rm(a)] ${\rm reg}(\delta_\X)=m_n(d_n - 1)+1$, where
$m_n=\min\{m : m\in\mathbb{N},\ m(d_n-1)>\sum_{i=1}^{n-1}(d_i-1)\}$. 
\item[\rm(b)] ${\rm v}_P(I_\X)\in\{{\rm v}_{e_0}(I_\X),\ldots,{\rm
v}_{e_n}(I_\X)\}$
for all $P\in \X$, where $e_j$ is the $j$-th unit vector of
$\mathbb{P}^n$.
\item[\rm(c)] $1+\sum_{i=1}^n(d_i-1)={\rm reg}(H_\X)={\rm
v}_{e_0}(I_\X)\geq\cdots\geq{\rm
v}_{e_n}(I_\X)=m_n(d_n-1)+1={\rm reg}(\delta_\X).
$
\end{enumerate}
}

We say that the finite set $\X\subset \mathbb{P}^n$ is \textit{Cayley-Bacharach} 
if every hypersurface of degree less than
${\rm reg}(S/I_\X)={\rm reg}(H_\X)$ which contains all but one
point of $\X$ must contain all points of $\X$ or
equivalently  ${\rm v}_P(I_\X)={\rm reg}(H_\X)$ for
all $P\in \X$ \cite[Definition~2.7]{geramita-cayley-bacharach}. As an
application, we give an arithmetical criterion that 
characterizes when $\X$ is Cayley--Bacharach (Corollary~\ref{coro1}).

In Appendix~\ref{procedure-degrees}, we give a procedure 
for \textit{Macaulay}$2$ \cite{mac2}, based on
Theorem~\ref{cicero-maria-vila-reg}, to obtain all 
possible values of ${\rm v}_P(I_\X)$ for $P\in\X$, and to compute 
the standard indicator function of degree ${\rm v}_P(I_\X)$ for
$P\in\X$ (Examples~\ref{ex2} and \ref{ex1},
Procedure~\ref{procedure1}). 

\section{Preliminaries}
Let $S:=K[t_0,\ldots,t_n]=\bigoplus_{d=0}^\infty S_d$ 
be a polynomial ring  
over a field $K$ with the standard grading. We will always assume that the set $\M$ of
monomials of $S$ 
is endowed with a 
monomial order $\prec$.

\begin{definition} Let $I \subset S$ be an ideal. The footprint of $I$ is the 
set 
\[
\Delta(I) = \{M \in \M : M \textrm{ is not a leading monomial of any\ } f \in 
I, f \neq 0\}. 
\]
For any integer $d \geq 0$ we define
\[
\Delta_d(I) = \{M \in \Delta(I) :   \deg(M) = d\}. 
\]
\end{definition}

The image of $\Delta(I)$, under the canonical 
map $S\mapsto S/I$, $x\mapsto \overline{x}$, is a basis of $S/I$ as a
$K$-vector space \cite[Proposition~6.52]{Becker-Weispfenning}. This
is a classical 
result of Buchberger and Macaulay (see \cite[Chapter~5]{CLO}). In
particular, the Hilbert function $H_I(d):=\dim_K(S_d/I_d)$ is the number of standard
monomials of degree $d$ for all $d\geq 0$, that is, $H_I(d)=|\Delta_d(I)|$ for 
all $d\geq 0$.

Let $\mathbb{P}^{n}$ be the $n$-dimensional projective space over the field $K$,
and let $\X \subset 
\mathbb{P}^{n}$.
Denote by $P_1,\ldots,P_m$ the points of $\mathcal{X}$ written with standard 
representation for
projective points, namely, the first nonzero entry from the left is equal to
1. 
The  evaluation map  
\begin{equation*}\label{ev-map}
\varphi_d\colon S_{d}\longrightarrow K^{|\mathcal{X}|},\ \ \ \ \ 
f\mapsto \left(f(P_1),\ldots,f(P_m)\right),
\end{equation*}
defines a linear map of
$K$-vector spaces. The image of $\varphi_d$, denoted by $C_{\mathcal{X}}(d)$,
defines a linear code .

In this paper we work with the 
following setup, already considered in 
\cite{cnl2017}. Let $K=\mathbb{F}_q$ be a finite field, 
let $K_0,\ldots, K_n$ be 
a collection of non-empty subsets of $K$, and let $\X$ be the 
{\it projective nested Cartesian set\/}
\begin{eqnarray*}
\mathcal{X}:=\left[K_0\times K_1\times\cdots\times 
K_n\right]:=\left\{(a_0:\cdots :a_n) \in \mathbb{P}^{n} : a_i\in K_i
\mbox{ for all } i\right\}.
\end{eqnarray*}
\quad We denote by $d_i$ the
cardinality of $K_i$ for $i=0,\ldots,n$. We shall always 
assume that $2\leq d_i\leq d_{i+1}$ for all $i$. From
\cite[Theorem~2.8]{cnl2017} we get that 
\begin{equation}\label{apr19-26}
| \X | = 1 + \sum_{i = 1}^n d_i \cdots
d_n.
\end{equation}
\quad We will denote by $S=K[t_0, \ldots, t_n]$ the
polynomial ring over the field $K$ and by $I_\X$ the vanishing 
ideal of $\X$. The Hilbert function of $S/I_\X$ is denoted by $H_\X$.

\begin{lemma}\label{lemma1.1}
The regularity of $H_\mathcal{X}$ is equal to $\sum_{i = 1 }^n (d_i - 1) + 
1$.
\end{lemma}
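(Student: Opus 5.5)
The plan is to compute the Hilbert function $H_\X$ directly through the footprint $\Delta(I_\X)$, using $H_\X(d)=|\Delta_d(I_\X)|$ from the preliminaries. For this I need a Gröbner basis of $I_\X$ for the graded lexicographic order with $t_0\prec\cdots\prec t_n$. I will rely on the description of $I_\X$ in \cite{cnl2017}. There the polynomials
$$
f_{ij}:=t_i\prod_{a\in K_j}(t_j-a\,t_i),\qquad 0\le i<j\le n,
$$
are shown to form a Gröbner basis of $I_\X$ in the nested-field setting. Each $f_{ij}$ vanishes on $\X$: take a point in standard form. If its first nonzero coordinate has index $>i$, then $t_i=0$ there. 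Otherwise $a_i\in\{0,1\}$, and when $a_i=1$ we have $a_j\in K_j$, so some factor vanishes.

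Since $t_i\prec t_j$ and the order is graded, the leading monomial of $f_{ij}$ is $t_it_j^{d_j}$. The footprint is therefore the set of monomials $t^\alpha$ that are divisible by no $t_it_j^{d_j}$ with $i<j$. Concretely, let $k$ be the least index with $\alpha_k>0$. Then $t^\alpha\in\Delta(I_\X)$ exactly when $\alpha_j\le d_j-1$ for all $j>k$, while $\alpha_k\ge1$ is arbitrary.

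As a sanity check I will count, for $d\gg0$, the standard monomials of degree $d$. For each $k$ and each choice of $(\alpha_{k+1},\dots,\alpha_n)$ with $0\le\alpha_j\le d_j-1$, the exponent $\alpha_k=d-\sum_{j>k}\alpha_j$ is forced. This gives $\sum_{k=0}^n\prod_{j>k}d_j=1+\sum_{i=1}^n d_i\cdots d_n=|\X|$, in agreement with Eq.~\eqref{apr19-26}. That agreement confirms the Gröbner basis claim independently, since the footprint of $I_\X$ always has exactly $|\X|$ elements in large degree. If the citation needs support in the stated generality, I will use this check to argue as follows. The ideal $J$ generated by the $f_{ij}$ satisfies $J\subseteq I_\X$, so $\Delta(I_\X)\subseteq\Delta(\operatorname{in}J)\subseteq$ the set above, and both sets have size $|\X|$ in large degree.

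Next I determine exactly when $H_\X(d)=|\X|$. A pair $(k,(\alpha_j)_{j>k})$ contributes a monomial of degree $d$ if and only if $\alpha_k=d-\sum_{j>k}\alpha_j\ge1$. Hence $H_\X(d)=|\X|$ if and only if $d\ge 1+\sum_{j>k}(d_j-1)$ for every $k$. The binding case is $k=0$, which gives the condition $d\ge 1+\sum_{i=1}^n(d_i-1)$. For $d=\sum_{i=1}^n(d_i-1)$, the tuple $\alpha_j=d_j-1$ for all $j\ge1$ with $k=0$ would need $\alpha_0=0$, so it is missing and $H_\X(d)<|\X|$. Therefore ${\rm reg}(H_\X)=\sum_{i=1}^n(d_i-1)+1$.

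The main obstacle is justifying the Gröbner basis, or more precisely the inclusion of $\Delta(I_\X)$ in the described set. I will handle it by citing \cite{cnl2017} and backing it up with the cardinality argument above.
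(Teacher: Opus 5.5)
Your proof is correct, and it takes a different route from the paper's. The paper does not use the footprint here. It iterates the recursion $H_{\X_n}(d)=H_{\X_{n-1}}(d)+H_{\X_n^*}(d-1)$ of \cite[Lemma~2.6]{cnl2017} to get $H_\X(d)=1+\sum_{j=1}^n H_{\X_j^*}(d-1)$. It then identifies $H_{\X_j^*}$ with the affine Hilbert function of $K_{n+1-j}\times\cdots\times K_n$ and imports from \cite{lrv2014} that its regularity index is $\sum_{i=n+1-j}^n(d_i-1)$, so the summand $j=n$ decides.

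Your count is the same decomposition made explicit. Group the standard monomials of degree $d$ by the least index $k$ in their support, divide by $t_k$, and set $t_k=1$. This gives exactly the affine footprint of $K_{k+1}\times\cdots\times K_n$ in degree at most $d-1$, i.e.\ the summand $j=n-k$. Your version is self-contained, since the affine regularity is proved rather than quoted. It also gives a closed formula for $H_\X(d)$ in every degree and the explicit witness $t_1^{d_1-1}\cdots t_n^{d_n-1}$ missing in degree $\sum_{i=1}^n(d_i-1)$. The paper's version buys generality. Lemma~\ref{lemma1.1} is stated before the restriction to subfields, and the recursion needs no Gr\"obner basis, whereas the paper invokes Eq.~\eqref{basedeG} only in the field case.

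Your fallback argument needs two small repairs. First, in the vanishing check, if the first nonzero coordinate has index $k<i$, then $a_i$ is an arbitrary element of $K_i$, not $0$ or $1$. What you need is $a_j/a_i\in K_j$ when $a_i\neq 0$, which holds because $K_i\subseteq K_j$ are fields.

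Second, write $D$ for the set of monomials you described. The inclusion $\Delta(I_\X)\subseteq D$ together with equal cardinalities for $d\gg 0$ only gives $\Delta_d(I_\X)=D_d$ for $d\gg 0$. That inclusion already yields $H_\X(d)\le |D_d|<|\X|$ at $d=\sum_{i=1}^n(d_i-1)$. But the upper bound on the regularity needs equality at $d=1+\sum_{i=1}^n(d_i-1)$. Add the following step. If $t^\alpha\in D$ has least support index $k$, then $t_k^N t^\alpha\in D$ for all $N$. Since ${\rm in}(I_\X)$ is a monomial ideal, $t^\alpha\in{\rm in}(I_\X)$ would put $t_k^N t^\alpha$ in ${\rm in}(I_\X)\cap D_{d+N}$ for every $N$. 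This set is empty for large $N$, a contradiction, so $\Delta(I_\X)=D$. With these fixes, the fallback works in any setting where the $f_{ij}$ vanish on $\X$ and Eq.~\eqref{apr19-26} holds.
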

\begin{proof}
Consider the following sets and their corresponding vanishing ideals
\begin{eqnarray*}
\mathcal{X}_{i}:=\left[K_{n-i}\times\cdots\times K_n\right] \text{ and } 
\mathcal{X}_{i}^{*}:=\left[1\times K_{n+1-i}\times\cdots\times K_n\right], \\
\text{ so \ } 
I_{\X_{i}}\subset K[t_{n-i},\ldots,t_n] \text{ and }
I_{\X_{i}^\ast}\subset K[t_{n-i},\ldots,t_n], 
\text{for } i=0,\ldots,n, 
\end{eqnarray*}
where $1$ in the Cartesian product represents the set $\{1\}$. 

From \cite[Lemma 2.6]{cnl2017} we have that 
$H_{\mathcal{X}_n}(d)=H_{\mathcal{X}_{n-1}}(d)+H_{\mathcal{X}_n^*}(d-1)$.
Applying this result recursively, we get that
\begin{equation}\label{apr15-26}
H_\mathcal{X}(d)=H_{\mathcal{X}_0}(d)+
\displaystyle \sum_{j=1}^nH_{\mathcal{X}_j^{*}}(d-1).
\end{equation}
Since $\mathcal{X}_0=[1],$ then $I_{\mathcal{X}_0}=0$ and 
$H_{\mathcal{X}_0}(d)=1$.
For $j \in \{1, \ldots, n\}$ we have that 
$$H_{\mathcal{X}_j^{*}}(d-1) = 
\ ^a H_{\mathcal{Y}_j}(d-1),
$$ 
where $\mathcal{Y}_j = K_{n+1-j}\times\cdots\times 
K_n \subset \mathbb{A}^{j}(K)$, and  
$\ ^a H_{\mathcal{Y}_j}(d-1)$ is the affine Hilbert function. From
\cite[Proposition~2.5 and Lemma 2.8]{lrv2014}, we get that 
$\mathrm{reg}(H_{\mathcal{X}_j^{*}}(d-1)) = \sum_{i = n + 1 - j}^n (d_i - 1)$
for all $j = 1,\ldots, n$ so the value of 
$\sum_{j=1}^nH_{\mathcal{X}_j^{*}}(d-1)$ is constant when $d - 1 
\geq  \sum_{i = 1 }^n (d_i - 1)$ but not before that, and by
Eq.~\eqref{apr15-26} the proof is
complete. 
\end{proof}

We have the following result.

\begin{lemma}\label{lemma1.2}
$\mathrm{reg}(\delta_{\mathcal{X}})  \leq
\mathrm{reg}(H_\mathcal{X})$.
\end{lemma}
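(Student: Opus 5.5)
The plan is to show directly that $\delta_\X(r)=1$ for $r:={\rm reg}(H_\X)$. Since $\delta_\X$ is non-increasing and eventually equal to $1$, as recalled in the Introduction from \cite[Proposition~2.15]{hilbert-min-dis}, the set $\{d\geq 0:\delta_\X(d)=1\}$ contains every $d\geq {\rm reg}(\delta_\X)$ and nothing smaller. So it is enough to exhibit one $d\le r$ with $\delta_\X(d)=1$, or even to show $\delta_\X(r)=1$ directly.

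\textbf{Step 1.} Since $H_\X(r)=|\X|=m$ and $H_\X(d)=\dim_K C_\X(d)$, the evaluation map $\varphi_r\colon S_r\to K^{m}$ is surjective. Indeed, its kernel is $I_\X\cap S_r$, so its image has dimension $\dim_K(S_r/(I_\X)_r)=H_\X(r)=m$. Hence $C_\X(r)=K^m$.

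\textbf{Step 2.} In particular the unit vector $(1,0,\ldots,0)$ lies in $C_\X(r)$. Concretely, there is $f\in S_r$ with $f(P_1)=1$ and $f(P_j)=0$ for $j\ne 1$; this $f$ is a homogeneous indicator function of $P_1$ of degree $r$. This vector has weight $1$, and every nonzero codeword has weight at least $1$, so $\delta_\X(r)=1$.

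\textbf{Step 3.} By the definition of ${\rm reg}(\delta_\X)$ as the least $d$ with $\delta_\X(d)=1$, we get ${\rm reg}(\delta_\X)\le r={\rm reg}(H_\X)$.

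An alternative route is ${\rm v}(I_\X)=\min_P{\rm v}_P(I_\X)\le\max_P {\rm v}_P(I_\X)={\rm reg}(H_\X)$, via \cite[Corollary~5.6]{min-dis-generalized} and \cite[Lemma~3.2(d)]{villa2024}. The direct argument above avoids these citations. The only point needing care is the identification of the kernel of $\varphi_r$ with $(I_\X)_r$, which is immediate from the definition of $I_\X$. I expect no real obstacle; the lemma holds for any finite $\X$, and Lemma~\ref{lemma1.1} is not needed.
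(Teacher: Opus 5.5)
Your proof is correct, but it takes a different route from the paper's. The paper argues numerically. It quotes \cite[Lemma~3.1]{cnl2017} for the bound ${\rm reg}(\delta_\X)\leq\sum_{i=1}^n(d_i-1)+1$, and then uses Lemma~\ref{lemma1.1}, which shows that this number equals ${\rm reg}(H_\X)$.

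You use only linear algebra. At $r={\rm reg}(H_\X)$ the kernel of $\varphi_r$ is $(I_\X)_r$, so $\dim_K C_\X(r)=H_\X(r)=|\X|$ and $C_\X(r)=K^{|\X|}$. That space contains a weight-one vector, so $\delta_\X(r)=1$, and ${\rm reg}(\delta_\X)\leq r$ follows directly from the definition of ${\rm reg}(\delta_\X)$ as a minimum. The monotonicity remark in your first paragraph is not needed.

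Your version is self-contained and holds for an arbitrary finite set of points in $\mathbb{P}^n$. It makes the lemma independent of Lemma~\ref{lemma1.1} and of the external bound from \cite{cnl2017}. The paper's version depends on that external result and on the explicit Hilbert-function computation for nested Cartesian sets. Its only extra payoff is that the common value $\sum_{i=1}^n(d_i-1)+1$ appears in the argument, but Lemma~\ref{lemma1.1} supplies that value anyway, and it could be combined with your inequality afterwards.

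Your alternative route via $\min_P{\rm v}_P(I_\X)\leq\max_P{\rm v}_P(I_\X)$ is also valid. It is exactly the chain of relations the paper records in the Introduction.
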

\begin{proof}
From \cite[Lemma 3.1]{cnl2017} we get that $\mathrm{reg}
(\delta_{\mathcal{X}})\leq \sum_{i = 1 }^n (d_i - 1) + 
1$, and the result follows from Lemma~\ref{lemma1.1}.
\end{proof}

In what follows we assume that $K_0\subset \cdots \subset K_n$ are subfields of 
$K$, with $|K_i|=d_i$ for all $0 \le i \le n$. Observe that
$d_{i+1} = d_i^{r_i}$, for some $r_i \ge 1$,
in particular $d_i - 1 \mid d_{i + 1} - 1$ for all 
$i = 0, \ldots, n-1$. 
Then, 
$\mathcal{X}=\left[K_0\times\cdots\times K_n\right]$ is a projective nested 
Cartesian set which
is called a {\it projective nested product of fields.} 

From now on we choose 
the graded lexicographic monomial order $\prec$  in $S$,
where $t_0 \prec \cdots \prec t_n$.
Then, from \cite[Proposition~2.11]{cnl2017} we get that
\begin{equation} \label{basedeG}
\mathcal{G} := \left\{ t_i t_j (t_j^{d_j - 1} - t_i^{d_j - 1}) :
0\leq i < j\leq n 
\right\}
\end{equation}
is a Gr\"obner basis for the vanishing ideal $I_\X$ of $\mathcal{X}$.

\section{First results}

Recall that $|K_i| = d_i$, for $i = 0, \ldots, n$ and we consider
$\X=[K_0 \times \cdots \times K_n] \subset \mathbb{P}^n(K_n)$.

\begin{definition}
Let $P \in \X$. An indicator function for $P$ is a homogeneous polynomial $f 
\in S$ such that 
$f(P) \neq 0$ and $f(Q) = 0$ for all $Q \in \X \setminus \{ P \}$.
\end{definition}

\begin{proposition}\label{ind-funct}
Let $P = (a_0 : \cdots : a_n) \in \X$ and let $j \in \{0, \ldots, n\}$ the least integer such 
that $a_j \neq 0$. For $j \in \{2, \ldots, n\}$ let $m_j$ be the least integer such $m_j 
(d_j - 1) > \sum_{i = 1}^{j - 1} (d_i - 1)$. Then $P$ admits an indicator 
function of degree $1 + \sum_{i = 1}^n (d_i - 1)$, if $j = 0$ or $j = 1$, and of degree 
$m_j (d_j - 1) + 1 + \sum_{i = j + 1}^n (d_i - 1)$, if $j \geq 2$,
here we disregard the last sum when $j = n$. An
indicator function of this degree, assuming that  
$a_j = 1$,  is 
\[ 
f = t_0 \prod_{i = 1}^n  \prod_{\lambda \in K_i \setminus \{a_i\}} (t_i - 
\lambda t_0) \;\;\; 
\textrm{if } j = 0; 
\] 
\[ 
f = t_1 (t_1^{d_1 - 1} - t_0^{d_1 - 1}) \prod_{i = 2}^n  \prod_{\lambda \in 
K_i \setminus \{a_i\}} 
(t_i - \lambda t_1) \;\;\; 
\textrm{if } j = 1; 
\] 
and
\[
f = t_j (f_1 - f_2) \; \prod_{\ell = j + 1}^n \prod_{\lambda \in K_\ell 
\setminus\{a_\ell\}} (t_\ell - \lambda
t_j) \;\;\; 
\textrm{if } j \geq 2, 
\]
where
\begin{equation*}
\begin{split}
f_1 =& t_j^{m_j(d_j - 1)} - \sum_{i = 1}^{j - 1} t_i^{m_j(d_j - 1)}  \\ & + 
\sum_{s = 2}^{j - 
1} \sum_{1 \leq j_1 < \cdots < j_s \leq j - 1}
(-1)^s t_{j_1}^{m_j(d_j - 1) - \sum_{i = 2}^s (d_{j_i} \, - \, 1) } \; 
t_{j_2}^{d_{j_2}\, - 
\,1} \cdots t_{j_s}^{d_{j_s} \, - \, 1} 
\end{split} 
\end{equation*}
and 	
\[
f_2 = t_0^{m_j(d_j - 1) - \sum_{i = 1}^{j - 1} (d_{i} \, - \, 1)  } \prod_{i 
= 1}^{j - 1} 
(t_0^{d_{i} \, - \, 1} - t_i^{d_{i} \, - \, 1})
\]
$($note that if $j = n$ then we take $f =  t_n (f_1 - f_2)$\rm{)}.
\end{proposition}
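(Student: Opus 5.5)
The plan is to verify directly the three defining properties of an indicator function for the explicit $f$: it is homogeneous of the stated degree, $f(P)\neq 0$, and $f(Q)=0$ for every $Q=(b_0:\cdots:b_n)\in\X\setminus\{P\}$. Each $Q$ is written in standard representation, and $k$ denotes the least index with $b_k\neq 0$, so $b_k=1$.

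\emph{Cases $j=0$ and $j=1$.} These are the easy cases.
\begin{itemize}
\item For $j=0$, points $Q$ with $k\geq 1$ are killed by the factor $t_0$. If $k=0$, then $b_0=1$, and $Q\neq P$ forces $b_i\neq a_i$ for some $i\geq 1$. Then the factor $t_i-b_it_0$ vanishes at $Q$, since $b_i\in K_i\setminus\{a_i\}$. At $P$ all factors are nonzero.
\item For $j=1$, the factor $t_1$ kills the points with $k\geq 2$. The factor $t_1^{d_1-1}-t_0^{d_1-1}$ vanishes whenever $b_0,b_1\in K_1$ are both nonzero (they are then $(d_1-1)$-th roots of unity), and the points with $k=0$, $b_1=0$ are already killed by $t_1$. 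At $P$ this factor equals $1$. The remaining product separates the points with $k=1$ exactly as in the case $j=0$.
\item The degree counts are immediate: $1+\sum_{i\geq1}(d_i-1)$ in both cases.
\end{itemize}

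\emph{Case $j\geq 2$: reduction.} The degree is $1+m_j(d_j-1)+\sum_{\ell>j}(d_\ell-1)$, after checking that every monomial of $f_1$ and of $f_2$ has degree $m_j(d_j-1)$. The factor $t_j$ kills all $Q$ with $k>j$. It also kills all $Q$ with $b_j=0$. The final product kills all $Q$ with $k=j$ that differ from $P$, and it is nonzero at $P$. So the whole matter reduces to evaluating $g:=f_1-f_2$, and the required facts are:
\begin{itemize}
\item $g(Q)\neq 0$ when $k=j$;
\item $g(Q)=0$ when $k<j$ and $b_j\neq 0$.
\end{itemize}

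\emph{Case $j\geq 2$: the key simplification.} Put $\varepsilon_i=1$ if $b_i\neq0$ and $\varepsilon_i=0$ otherwise. Consider any $b\in K_i$ and any positive exponent $N$ divisible by $d_i-1$; then $b^N=\varepsilon_i$, because $K_i^*$ is cyclic of order $d_i-1$.
\begin{itemize}
\item Every exponent in $f_1$ is positive, because $m_j(d_j-1)>\sum_{i<j}(d_i-1)$.
\item Every exponent in $f_1$ is divisible by $d_{j_1}-1$. This uses the nesting $d_{j_1}-1\mid d_i-1$ for $i\geq j_1$, and $j_1$ is the smallest index in each term.
\end{itemize}
Hence
\[
f_1(Q)=\varepsilon_j+\sum_{\emptyset\neq T\subseteq\{1,\ldots,j-1\}}(-1)^{|T|}\prod_{i\in T}\varepsilon_i=\varepsilon_j-1+\prod_{i=1}^{j-1}(1-\varepsilon_i).
\]
In the same way,
\[
f_2(Q)=b_0^{N_0}\prod_{i=1}^{j-1}\bigl(b_0^{d_i-1}-\varepsilon_i\bigr),\qquad N_0=m_j(d_j-1)-\sum_{i<j}(d_i-1)>0.
\]

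\emph{Case $j\geq 2$: case split on $k$.} Assume $b_j\neq 0$ where relevant, so that $\varepsilon_j=1$.
\begin{itemize}
\item If $k=0$, then $b_0=1$, so $f_1(Q)=\prod(1-\varepsilon_i)=f_2(Q)$.
\item If $1\leq k<j$, then $b_0=0$ and $N_0>0$ give $f_2(Q)=0$. Also $\varepsilon_k=1$ makes the product vanish, so $f_1(Q)=\varepsilon_j-1=0$.
\item If $k=j$, then $f_2(Q)=0$ and $f_1(Q)=1$.
\end{itemize}
This finishes the verification.

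The main obstacle is the inclusion–exclusion identity for $f_1(Q)$. It needs care with the exact exponents: positivity, so that $0^N=0$ and not $1$, and divisibility by $d_{j_1}-1$, so that every nonzero $b\in K_{j_1}$ raised to that exponent gives $1$. This is precisely where the field structure of the tower $K_0\subset\cdots\subset K_n$ and the choice of $m_j$ are used.
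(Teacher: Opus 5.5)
Your proposal is correct and follows essentially the same route as the paper. Both arguments evaluate $f$ directly at each $Q$: the factor $t_j$ and the final product dispose of the points with $b_j=0$ or with leading index $j$, and an inclusion--exclusion computation gives $f_1(Q)=f_2(Q)$ when the leading index is below $j$. Your closed form $\varepsilon_j-1+\prod_{i<j}(1-\varepsilon_i)$ is a compact version of the paper's $(1-1)^u$ count, and it lets you treat all points with $b_0\neq 0$ in one case where the paper splits into $u=1$ and $u\geq 2$. You also make explicit the positivity and divisibility of the exponents in $f_1$ and $f_2$, which the paper uses without comment, and you supply the $j=0$ case that the paper omits.
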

\begin{proof}
The case where $j = 0$ is simple to verify, and we omit the proof. Let $Q = (b_0 : \cdots : 
b_n) \in \X$. If $j = 1$ then we clearly have $f(Q) = 0$ in the cases where $b_0 \neq 0$ and 
$b_1 \neq 
0$, or $b_1 = 0$. If $b_1 \neq 0$ and $b_0 = 0$ then we may assume $b_1 = 1$ and it's easy to 
check that $f(Q) \neq 0$ if and only if $Q = P$. Before proceeding to the case $j \geq 2$ we 
note that, since $K_i \subset K_{i + 1}$ for $i = 0, \ldots, n - 1$, we have that $d_i - 1$ is 
a factor of $d_{i + 1} - 1$ for all $i = 0, \ldots, n - 1$. Assume now  that $j \geq 2$.  We 
note that 
if $b_j = 0$ then $f(Q) = 0$, so we assume from now on that $b_j \neq 0$. Suppose that there 
are 
$u$ elements in the set $\{b_0, \ldots, b_{j - 1}\}$ which are nonzero, with $u \geq 1$. If $u 
= 1$ and $b_0 \neq 0$ then $f_1(Q) = 1$ and $f_2(Q) = 1$, hence $f(Q) = 0$. If $u \geq 1$  and 
$b_0 = 0$ then $f_2(Q) = 0$ and 
\[
f_1(Q) = 1 - u + \sum_{s = 2}^u (-1)^s \binom{u}{s} = \sum_{s = 0}^u (-1)^s \binom{u}{s} = (1 - 
1)^u = 0
\]
so $f(Q) = 0$. If $u \geq 2$ and $b_0 \neq 0$ then, as above,  $f_1(Q) = 0$ and also $f_2(Q) = 
0$ because at least one factor in the product is equal to zero. Finally, we treat the case 
where $b_0 = \cdots = b_{j - 1} = 0$ and we assume $b_j = 1$. We have $f_1(Q) = 1$ and $f_2(Q) 
= 0$, and it's simple to check that $f(Q) \neq 0$ if and only if $Q = P$.
\end{proof}

For each $j \in \{2, \ldots, n\}$ we defined  $m_j$ as the least integer
such that $m_j 
(d_j - 1) > \sum_{i = 1}^{j - 1} (d_i - 1)$, so equivalently $m_j$ is 
the greatest integer such that
\begin{equation} \label{equiv}
(m_j - 1)(d_j - 1) \leq \sum_{i = 1}^{j - 1} (d_i - 1),
\end{equation}
 or the greatest integer such that
\begin{equation} \label{equiv2}
m_j(d_j - 1) \leq \sum_{i = 1}^{j} (d_i - 1).
\end{equation} 
This definition makes sense for $j = 1$, and then 
$m_1 = 1$. Also the formula for the degree of the indicator functions in the above result, in 
the case where $j \geq 2$, also holds for $j = 1$ with $m_1 = 1$. So from now on we assume that 
$m_1 = 1$.

\begin{lemma}\label{min-deg}
The least degree of the indicator functions in Proposition \ref{ind-funct} is $m_n (d_n - 1) + 
1$.
\end{lemma}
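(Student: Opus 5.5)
Set $D_j := m_j(d_j-1)+1+\sum_{i=j+1}^n (d_i-1)$ for $j=1,\ldots,n$, with $m_1=1$, so that $D_1=1+\sum_{i=1}^n(d_i-1)$. Then the degrees in Proposition~\ref{ind-funct} are $D_1$ (for $j=0$ and $j=1$) and $D_j$ for $j\ge 2$. I plan to show that $D_j\ge D_{j+1}$ for $j=1,\ldots,n-1$. Once this holds, the minimum over all $j$ is $D_n=m_n(d_n-1)+1$, because the sum $\sum_{i=n+1}^n$ is empty.

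To compare consecutive degrees, observe that
\[
D_j-D_{j+1}=m_j(d_j-1)+(d_{j+1}-1)-m_{j+1}(d_{j+1}-1),
\]
so it suffices to prove
\[
(m_{j+1}-1)(d_{j+1}-1)\le m_j(d_j-1).
\]
By the characterization \eqref{equiv} of $m_{j+1}$, we have $(m_{j+1}-1)(d_{j+1}-1)\le \sum_{i=1}^{j}(d_i-1)$. This alone is not enough, since the right-hand side may exceed $m_j(d_j-1)$. The key input is the divisibility $d_j-1\mid d_{j+1}-1$, which holds because the $K_i$ are nested fields. It makes $(m_{j+1}-1)(d_{j+1}-1)$ a multiple of $d_j-1$, say $k(d_j-1)$, with $k(d_j-1)\le\sum_{i=1}^{j}(d_i-1)$. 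By \eqref{equiv2}, $m_j$ is the greatest integer with $m_j(d_j-1)\le\sum_{i=1}^j(d_i-1)$, hence $k\le m_j$. This gives the required inequality, and so $D_j\ge D_{j+1}$.

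The boundary case $j=1$ deserves a check. There $m_1=1$, and \eqref{equiv2} holds trivially with $m_1(d_1-1)=d_1-1$, so the same argument applies. The main subtlety is realizing that the divisibility condition is what is needed: without it the monotonicity can fail. The rest is bookkeeping.
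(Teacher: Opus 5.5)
Your proof is correct and follows essentially the paper's argument: the divisibility $d_j-1\mid d_{j+1}-1$, combined with the two characterizations \eqref{equiv} and \eqref{equiv2} of $m_j$, gives $(m_{j+1}-1)(d_{j+1}-1)\le m_j(d_j-1)$, and hence the monotonicity of the degrees. The only difference is that you start the chain at $j=1$ (using $m_1=1$), whereas the paper compares the $j\in\{0,1\}$ degree directly with $m_n(d_n-1)+1$ and runs the chain only for $j=2,\ldots,n-1$; your version has the small advantage of also giving $D_1\ge D_2$, which Theorem~\ref{cicero-maria-vila-reg}(c) later relies on.
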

\begin{proof}
We know that in the cases $j = 0$ and $j = 1$ the degrees of the
indicator functions in 
Proposition~\ref{ind-funct} coincide, 
being equal to $1 + \sum_{i = 1}^n (d_i - 1)$, and  
from the definition of $m_n$ we get that $(m_n - 1) (d_n - 1) \leq \sum_{i = 
1}^{n - 1} (d_i - 1)$ so $m_n (d_n - 1) + 1 \leq \sum_{i = 
1}^{n} (d_i - 1) + 1$.

Now we  prove that
\[
m_n (d_n - 1) + 1= \min \{ m_j(d_j - 1) + 1 + \sum_{i = j + 1}^n (d_i - 1) 
: j = 2, \ldots, 
n 
\}.
\]
Let $j \in \{ 2,\ldots, n - 1\}$, we recall that $d_j - 1 \mid d_{j + 1} - 
1$ and we note that 
\[
(m_{j + 1} - 1) \left( \frac{d_{j + 1} - 1}{d_{j} - 1} \right) (d_j - 1) = (m_{j + 1} - 1)(d_{j 
+ 1} - 1)
\]
so from \eqref{equiv} we have that 
\[
(m_{j + 1} - 1) \left( \frac{d_{j + 1} - 1}{d_{j} - 1} \right) (d_j - 1) \leq \sum_{i = 1}^{j} 
(d_i - 1).
\]
\quad Letting $k=(m_{j+1}-1)\big(\frac{d_{j+1}-1}{d_j-1}\big)$, one has
$k(d_j-1)\leq\sum_{i=1}^j(d_i-1)$, and from the equivalent definition
 \eqref{equiv2} for $m_j$ we must have that 
$$
k=(m_{j+1}-1)\left(\frac{d_{j+1}-1}{d_j-1}\right)\leq m_j,
$$
so $(m_{j + 1} - 1) (d_{j + 1} - 1) \leq m_j (d_{j} - 1)$. From this we get
$m_{j + 1}  (d_{j + 1} - 1) \leq m_j (d_{j} - 1) + (d_{j + 1} - 1)$ which implies that
\begin{equation}\label{apr16-26-1}
m_{j+1}(d_{j+1} - 1) + 1 + \sum_{i = j + 2}^n (d_i - 1)  \leq m_j(d_j - 1) + 1 + \sum_{i = j + 
1}^n  (d_i - 1)
\end{equation}
and this concludes the proof.
\end{proof}

Now we prove an important property of indicator functions of minimal degree.

\begin{proposition}\label{xj-div-f}
Let $h$ be an indicator function for the point $P = (a_0 : \cdots : a_n) \in 
\X$, such that all the monomials in $h$ belong to $\Delta(I_\X)$. Let $j 
\in \{0, \ldots, n\}$ be an integer such 
that $a_j \neq 0$. Then, $t_j | h$. Furthermore, if $h$ is of minimal
degree, then $t_j^2 \nmid h$.
\end{proposition}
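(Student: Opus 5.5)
The plan is to split $h$ according to its dependence on $t_j$ and to use the explicit Gr\"obner basis $\mathcal{G}$ of \eqref{basedeG}. For the first assertion, write $h = t_j g + r$, where $g$ is homogeneous and $r$ is a homogeneous polynomial in which $t_j$ does not occur. Every monomial of $r$ is a monomial of $h$, so every monomial of $r$ lies in $\Delta(I_\X)$. The goal is to show that $r=0$.

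Consider the subset $\X'=\{Q=(b_0:\cdots:b_n)\in\X : b_j=0\}$. Deleting the $j$-th coordinate identifies $\X'$ with the projective nested product of fields $[K_0\times\cdots\times\widehat{K_j}\times\cdots\times K_n]\subset\mathbb{P}^{n-1}$, in the variables $t_0,\ldots,\widehat{t_j},\ldots,t_n$. The tower is still a tower of subfields and the cardinalities are still nondecreasing, so \eqref{basedeG} applies to $\X'$ as well. Since $a_j\neq 0$, we have $P\notin\X'$, and therefore $r(Q)=h(Q)=0$ for all $Q\in\X'$. Hence $r\in I_{\X'}$.

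Next I compare footprints. In the graded lexicographic order with $t_i\prec t_k$ for $i<k$, the leading monomial of $t_it_k(t_k^{d_k-1}-t_i^{d_k-1})$ is $t_it_k^{d_k}$. So $\Delta(I_\X)$ consists of the monomials not divisible by any $t_it_k^{d_k}$ with $i<k$. Likewise, $\Delta(I_{\X'})$ consists of the monomials in the variables other than $t_j$ that are not divisible by any $t_it_k^{d_k}$ with $i<k$ and $i,k\neq j$. For a monomial in which $t_j$ does not occur, the divisibility conditions involving $t_j$ are vacuous, so the two conditions coincide. Thus every monomial of $r$ is standard for $I_{\X'}$. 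A nonzero element of $I_{\X'}$ has its leading monomial outside $\Delta(I_{\X'})$, so $r\in I_{\X'}$ forces $r=0$. This gives $h=t_jg$. The case $n=0$ is trivial, and if $\X'$ were empty the same footprint argument would still apply because the ideal is then everything in the relevant degrees. I expect the main point needing care to be this footprint comparison, specifically that the Gr\"obner basis for $\X'$ is exactly the subset of $\mathcal{G}$ not involving $t_j$.

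For the second assertion, suppose $h$ has minimal degree among indicator functions of $P$, i.e.\ $\deg h={\rm v}_P(I_\X)$, and suppose $h=t_j^2g$. I claim that $t_jg$ is also an indicator function of $P$. At $P$ we have $h(P)=a_j\cdot(t_jg)(P)\neq 0$, so $(t_jg)(P)\neq 0$. Now let $Q\neq P$. If $b_j=0$, then $(t_jg)(Q)=0$. If $b_j\neq 0$, then $0=h(Q)=b_j(t_jg)(Q)$, so again $(t_jg)(Q)=0$. Thus $t_jg$ is an indicator function of $P$ of degree $\deg h-1$, which contradicts minimality. Hence $t_j^2\nmid h$.
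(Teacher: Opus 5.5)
Your proof is correct and follows the paper's argument. You split off the $t_j$-free part $r$ of $h$, observe that it vanishes on $[K_0\times\cdots\times\widehat{K_j}\times\cdots\times K_n]$ and has only standard monomials, conclude $r=0$, and then obtain $t_j^2\nmid h$ by cancelling one factor $t_j$ to get a lower-degree indicator function. The only difference is in how you conclude $r=0$. The paper uses the containment $I_{\X_{\widehat{j}}}\subset I_\X$ together with the footprint of $I_\X$, whereas you apply \eqref{basedeG} to the smaller nested product and check that $\Delta(I_{\X'})$ and $\Delta(I_\X)$ agree on $t_j$-free monomials; both routes are valid.

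One aside in your proposal is inaccurate: the case $n=0$ is not ``trivial''. There a nonzero constant is an indicator function not divisible by $t_0$, so the statement itself fails in that degenerate case, which the paper implicitly excludes.
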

\begin{proof}
We write $h = t_j h_1 + h_2$, where $h_1$ and $h_2$ are homogeneous 
polynomials and $h_2$ does not have the variable $t_j$ (i.e.\ $\deg_{t_j} 
h_2 = 0$). From the hypothesis we get that 
\[
h(a_0, \ldots, a_{j - 1}, 0 , 
a_{j+1}, \ldots, a_n) = 0 \textrm{ for all  } (a_0 : \ldots : a_{j - 1} : 0 
: a_{j+1}: \ldots : a_n) \in \X.
\]
Thus, taking $\X_{\widehat{j}} := [K_0 \times \cdots \times \widehat{K_j} 
\times \cdots 
\times K_n]$, where $\widehat{K_j}$ means that $K_j$ is not a factor in the 
product, we get that 
\[
h_2(a_0, \ldots, a_{j - 1},  
a_{j+1}, \ldots, a_n) = 0 \textrm{ for all  } (a_0 : \ldots : a_{j - 1} : 
a_{j+1} : \ldots : a_n) \in \X_j,  
\]
so $h_2 \in I_{\X_{\widehat{j}}}$. 
Clearly 
$I_{\X_{\widehat{j}}} \subset 
I_{\X}$ and 
since we are 
assuming that all the monomials in $h$ are in $\Delta(I_\X)$ we must have 
$h_2 = 0$. This shows that $t_j \mid h$, and 
 if 
$h$ is of minimal degree 
we must have $t_j^2 \nmid h$,  otherwise $h_1 = t_j \tilde{h}_1$ would be an 
indicator function for $P$ with degree one less than the degree of
$h$. 
\end{proof}

\section{Main result}

\begin{definition} Let $P\in \X$. We say an indicator function $g$
of $P$ is 
\textit{standard} if all
monomials that occur in $g$ are standard. 
\end{definition}

In what follows, we denote by $e_j$ the $j$-th canonical 
basis vector of $K^n$, where $j = 0, \ldots , n$.

\begin{lemma}\label{bridge-p-p1}
Let $P = [e_j] \in \mathbb{P}^n$, $0\leq j<n$, and let
$P' = [e_j + \sum_{i=j+1}^n a_i e_{i}]$, $a_i\in K_i$, $a_k\neq 0$ for
some $k$. If $g$ is an indicator function for $P$,
then
$$
h(t_0,\ldots,t_n):=g(t_0,\ldots,t_j,t_{j+1}-a_{j+1}t_j,\ldots,t_n-a_nt_j) 
$$
is an indicator function for $P'$ and $\deg(h)=\deg(g)$.
\end{lemma}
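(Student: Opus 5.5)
The plan is to view $h$ as $g$ composed with a linear change of coordinates and to check that this change permutes the points of $\X$ while sending $P'$ to $P$. Let $\phi\colon K^{n+1}\to K^{n+1}$ be the linear map
$$
\phi(b_0,\ldots,b_n)=(b_0,\ldots,b_j,\,b_{j+1}-a_{j+1}b_j,\ldots,b_n-a_nb_j),
$$
so that $h(b)=g(\phi(b))$ for every $b\in K^{n+1}$. The map $\phi$ is invertible, with inverse $b\mapsto(b_0,\ldots,b_j,b_{j+1}+a_{j+1}b_j,\ldots,b_n+a_nb_j)$. Hence $\phi$ induces a bijection of $\mathbb{P}^n$, and the substitution defining $h$ is an invertible linear change of variables. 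Since the substitution is linear, $h$ is homogeneous of degree $\deg(g)$; it is nonzero because the substitution is invertible, so $\deg(h)=\deg(g)$.

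The key step is to show that $\phi$ maps $\X$ bijectively onto $\X$. Take $Q=(b_0:\cdots:b_n)\in\X$ in standard representation. There are two cases.
\begin{enumerate}
\item[(i)] If some $b_i\neq 0$ with $i<j$, then $b_j\in K_j\subset K_i$ for every $i\geq j$. Since $a_i\in K_i$, each entry $b_i-a_ib_j$ lies in $K_i$, because $K_i$ is a field containing $K_j$.
\item[(ii)] If $b_0=\cdots=b_{j-1}=0$, scale so that the first nonzero entry is $1$. If $b_j=0$, then $\phi$ fixes $Q$. If $b_j=1$, then $b_i-a_i\in K_i$ again.
\end{enumerate}
In both cases $\phi(Q)$ has a representative with $i$-th coordinate in $K_i$ for all $i$, so $\phi(Q)\in\X$. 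The same argument applied to $\phi^{-1}$ (replace $a_i$ by $-a_i$) gives $\phi^{-1}(\X)\subset\X$, so $\phi$ restricts to a bijection of $\X$. This is where the field structure of the nested $K_i$ is essential: closure under subtraction and multiplication, together with $K_j\subset K_i$, is exactly what is needed, and this step is the main point of the argument.

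It remains to evaluate. Directly, $\phi(e_j+\sum_{i>j}a_ie_i)=e_j$, so $\phi(P')=P$. Therefore $h(P')=g(P)\neq 0$. For $Q\in\X\setminus\{P'\}$, we have $\phi(Q)\in\X\setminus\{P\}$ by injectivity, so $h(Q)=g(\phi(Q))=0$. Nonvanishing is independent of the chosen representative because $h$ is homogeneous. Hence $h$ is an indicator function for $P'$ of degree $\deg(g)$. The hypothesis $a_k\neq 0$ only ensures $P'\neq P$ and plays no role in the argument.
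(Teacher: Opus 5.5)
Your proof is correct and follows the same idea as the paper: $h=g\circ\phi$ for the invertible linear substitution $\phi$ with $\phi(P')=P$, so the nonvanishing of $h$ at a point of $\X$ is pulled back from that of $g$. The one difference is that you explicitly verify $\phi(\X)=\X$ using that the $K_i$ are nested fields. The paper's contradiction argument uses this only tacitly: from $g(\phi(Q'))\neq 0$ it concludes $\phi(Q')=\mu e_j$ ``because $g$ is an indicator function for $P$'', which requires $\phi(Q')\in\X$. Your version therefore supplies a step the paper leaves implicit.
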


\begin{proof} 
For convenience when projective points in $\mathbb{P}^n$ 
are written
in standard notation we write $(a_0,\ldots,a_n)$ instead of
$(a_0:\cdots:a_n)$.

As $g(P)\neq 0$, from the equality $h(P')=g(P)$, one has
$h(P')\neq 0$. Take $Q'\neq P'$ and let
$Q'=(q_0',\ldots,q_j',\ldots,q_n')\in\mathbb{P}^n$. 
Then,
$$
Q:=Q'-a_{j+1}e_{j+1}-\cdots-a_ne_n\neq
e_j,
$$
and $g(Q)=0$ since $Q\neq P=e_j$. Note that 
\begin{equation}\label{mar16-26}
g(q_0',\ldots,q_j',q_{j+1}'-a_{j+1}q_j',\ldots,q_n'-a_nq_j')=0.
\end{equation}
Indeed, if $g(q_0',\ldots,q_j',q_{j+1}'-a_{j+1}q_j',\ldots,q_n'-a_nq_j')\neq 0$,
then 
$$(q_0',\ldots,q_j',q_{j+1}'-a_{j+1}q_j',\ldots,q_n'-a_nq_j')=\mu e_j,$$
for some $\mu\in K^*$ because $g$ is an indicator function for $P$.
Hence, we have $q_0'=\cdots=q_{j-1}'=0$, $q_j'=\mu$,
$q_{j+1}'=a_{j+1}q_j',\ldots,q_n'=a_nq_j'$. Thus, $\mu=1$ because $Q$ is in
standard form, and consequently $Q'=P'$, a contradiction. Therefore,  
by Eq.~\eqref{mar16-26}, we get
$$
0=g(q_0',\ldots,q_j',q_{j+1}'-a_{j+1}q_j',\ldots,q_n'-a_nq_j')=
h(q_0',\ldots,q_n')=h(Q').
$$
\quad Thus, $h(Q')=0$ for $Q'\neq P'$ and $h$ is an indicator function
for $P'$.
\end{proof}

\begin{theorem}\label{p-ej}
Let $P = [e_j] \in \mathbb{P}^n$, $0\leq j<n$, and let
$P' = [e_j + \sum_{i=j+1}^n a_i e_{i}]$, $a_i\in K_i$, $a_k\neq 0$ for
some $k$. Then, the least degree of an indicator function for 
$P$ is the least degree of an indicator function for $P'$ and 
${\rm v}_P(I_\X)={\rm
v}_{P'}(I_\X)$. 
\end{theorem}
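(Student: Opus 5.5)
The plan is to prove the two inequalities ${\rm v}_{P'}(I_\X)\le {\rm v}_P(I_\X)$ and ${\rm v}_P(I_\X)\le {\rm v}_{P'}(I_\X)$ by transporting indicator functions between $P$ and $P'$ with a degree-preserving linear change of coordinates. Throughout, I use that the least degree of an indicator function of a point equals its local v-number \cite[Lemma~3.2(b)]{villa2024}.

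For the first inequality, let $g$ be an indicator function for $P=[e_j]$ of degree ${\rm v}_P(I_\X)$. Lemma~\ref{bridge-p-p1} shows that
$$h(t_0,\ldots,t_n)=g(t_0,\ldots,t_j,t_{j+1}-a_{j+1}t_j,\ldots,t_n-a_nt_j)$$
is an indicator function for $P'$ with $\deg h=\deg g$. Hence ${\rm v}_{P'}(I_\X)\le {\rm v}_P(I_\X)$.

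For the reverse inequality I would prove the converse of Lemma~\ref{bridge-p-p1}. Start from an indicator function $h'$ for $P'$ and set
$$g'(t_0,\ldots,t_n):=h'(t_0,\ldots,t_j,t_{j+1}+a_{j+1}t_j,\ldots,t_n+a_nt_j).$$
This is homogeneous of the same degree, since the substitution is an invertible linear map $\sigma$ on $K^{n+1}$, and $g'(e_j)=h'(P')\neq0$. It remains to show that $g'$ vanishes at every $Q\in\X\setminus\{P\}$, that is, $h'(\sigma(Q))=0$. The point to check is that $\sigma$ maps $\X$ into $\X$. Write $Q=(q_0,\ldots,q_n)$ with $q_i\in K_i$. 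Then $\sigma(Q)$ has coordinates $q_i$ for $i\le j$ and $q_i+a_iq_j$ for $i>j$. Because $q_j\in K_j\subset K_i$ and $a_i\in K_i$, and $K_i$ is a field, each $q_i+a_iq_j$ lies in $K_i$. This is precisely where the nested-field hypothesis enters; it would fail for general nested Cartesian sets. Moreover $\sigma(Q)\neq0$ since $\sigma$ is invertible, its first nonzero entry is unchanged (the first $j+1$ coordinates are untouched), so it is still in standard form, and $\sigma(Q)=P'$ iff $Q=e_j$. Hence $\sigma(Q)\in\X\setminus\{P'\}$ and $h'(\sigma(Q))=0$.

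The same field argument also justifies the well-definedness implicit in Lemma~\ref{bridge-p-p1}: $\sigma^{-1}$ maps $\X$ to $\X$ as well, so the points $Q$ used there lie in $\X$. With both inequalities in hand, ${\rm v}_P(I_\X)={\rm v}_{P'}(I_\X)$. I expect no real obstacle; the only delicate point is the membership $q_i+a_iq_j\in K_i$ together with the observation that the standard representation is preserved.
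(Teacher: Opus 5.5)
Your proof is correct and follows the paper's route. Lemma~\ref{bridge-p-p1} gives one inequality, the inverse substitution $t_i\mapsto t_i+a_it_j$ ($i>j$) gives the other, and the fact that the least degree of an indicator function equals the local v-number turns this into ${\rm v}_P(I_\X)={\rm v}_{P'}(I_\X)$. Your explicit check that these substitutions map $\X$ into itself (since $q_j\in K_j\subset K_i$, $a_i\in K_i$, and $K_i$ is a field) is a worthwhile addition, because the paper's proof of Lemma~\ref{bridge-p-p1} uses this fact only implicitly.
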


\begin{proof} Let $h$ be any indicator function for $P'$. 
Following the proof of Lemma~\ref{bridge-p-p1}, we
obtain that 
$$
g(t_0,\ldots,t_n):=h(t_0,\ldots,t_j,t_{j+1}+a_{j+1}t_j,\ldots,t_n+a_nt_j) 
$$
is an indicator function for $P$ and $\deg(g)=\deg(h)$. Then, by
Lemma~\ref{bridge-p-p1} the first assertion follows. Then, by
\cite[Lemma~3.2(b)]{villa2024}, 
${\rm v}_{P}(I_\X)$  (resp. ${\rm v}_{P'}(I_\X)$) is the least degree
of an indicator function of 
$P$ (resp. $P'$), and consequently ${\rm v}_P(I_\X)={\rm v}_{P'}(I_\X)$.
\end{proof}

The above result shows that if we want to determine the least degree of  
indicator functions of the points in $\X$, it is enough to consider the 
points $[e_i]$, where $i = 0, \ldots, n$.

\begin{lemma} \label{zero-function}
Let $P = (a_0 : \cdots : a_n) \in \X$ and let $j$ be the least index such 
that $a_j \neq 0$.
Let $g$
be a homogeneous polynomial of $S$ of degree $1\leq d\leq
\displaystyle \sum_{\substack{i = 0 \\ i \neq j}}^{n}(d_i-1)$.
If $g$ vanishes at all
points of $\mathcal{X}\setminus\{P\}$, then $g(P)=0$. 
\end{lemma}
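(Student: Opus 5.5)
We first pass to the affine chart $t_j=1$. Put $\mathcal{A}:=K_0\times\cdots\times K_{j-1}\times\{1\}\times K_{j+1}\times\cdots\times K_n$. Every point of $\mathcal{A}$ represents a point of $\X$, since its coordinates lie in the right fields. Distinct points of $\mathcal{A}$ give distinct projective points, because their $j$-th coordinates are both $1$. Conversely, $P$ is represented by $p:=(a_0/a_j,\ldots,a_n/a_j)$, which has $0$ in positions $<j$ and $a_i/a_j\in K_i$ for $i>j$ (using $K_j\subset K_i$); so $p\in\mathcal{A}$.

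Let $G:=g(t_0,\ldots,t_{j-1},1,t_{j+1},\ldots,t_n)$, a polynomial in the $n$ variables $t_i$, $i\neq j$, with $\deg G\le d$. By hypothesis $G$ vanishes on $\mathcal{A}\setminus\{p\}$, and it suffices to show $G(p)=0$.

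\emph{The case $d<\sum_{i\neq j}(d_i-1)$.} Here we invoke Carvalho's result \cite{car-2013} on affine Cartesian codes over $\mathcal{A}$ (equivalently, the footprint bound for $\prod_{i\neq j}(t_i^{d_i}-t_i)$). It says that a polynomial of degree $d<\sum_{i\ne j}(d_i-1)$ which does not vanish on all of $\mathcal{A}$ has at least $2$ nonzeros on $\mathcal{A}$. Since $G$ has at most one nonzero, namely possibly at $p$, it must vanish on all of $\mathcal{A}$, so $G(p)=0$. Note that this step uses only the points of $\X$ with $t_j\neq 0$.

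\emph{The boundary case $d=\sum_{i\neq j}(d_i-1)$.} This is the main obstacle, because affine indicator functions of exactly this degree do exist, for instance $\prod_{i\ne j}\prod_{\lambda\in K_i\setminus\{p_i\}}(t_i-\lambda)$ with $p_i$ the $i$-th coordinate of $p$. Here we must also use the points of $\X$ with $t_j=0$. Suppose, for contradiction, that $G(p)\neq 0$.

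We reduce $G$ modulo the affine Gröbner basis $\{t_i^{d_i}-t_i : i\neq j\}$. The remainder $R$ is the unique reduced representative of the function $G|_{\mathcal{A}}$, which is a nonzero multiple of the indicator function of $p$. Hence $R=c\prod_{i\neq j}\prod_{\lambda\in K_i\setminus\{p_i\}}(t_i-\lambda)$ with $c\neq 0$, and its top-degree form is $c\prod_{i\ne j}t_i^{d_i-1}$.

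Next we homogenize. The relation $G=R+\sum_i u_i(t_i^{d_i}-t_i)$ becomes, after homogenizing with respect to $t_j$ in degree $d$, the statement that $g$ is congruent to $c\prod_{i\neq j}t_i^{d_i-1}$ modulo $t_j$ and modulo the ideal generated by the binomials $t_it_j(t_j^{d_j-1}-t_i^{d_j-1})$ and their analogues in \eqref{basedeG}. The point to check carefully is that the degree stays $\le d$, so no top-degree cancellation ruins this; to keep this clean we would work with the homogeneous Gröbner basis $\mathcal{G}$ of \eqref{basedeG} rather than the affine one.

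Finally we evaluate $g$ at a point $Q\in\X$ with $t_j=0$ and all other coordinates nonzero, say $Q=(1:\cdots:1:0:1:\cdots:1)$, with the convention $Q=(1:\cdots:1:0)$ if $j=n$. Since $n\ge 1$, $Q$ has a nonzero coordinate, so it is a genuine point of $\X$, and $Q\neq P$ because $a_j\neq 0$. The vanishing relations from $\mathcal{G}$ that involve $t_j$ are killed at $Q$ because $t_j=0$ there. In the remaining terms, the powers $t_i^{d_i}$ act on nonzero coordinates $b_i\in K_i$ exactly as $t_i$ does, since $b_i^{d_i-1}=1$, so these reductions do not change values. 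We therefore expect $g(Q)=c\neq 0$, contradicting $g(Q)=0$. Hence $G(p)=0$, so $g(P)=0$.

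If the bookkeeping in this homogenization step proves awkward, the fallback is an induction on $n$. Restrict $g$ to the hyperplane $t_j=0$, which meets $\X$ in a projective nested product with one fewer factor; the degree bound and the induction hypothesis should then force the top form $c\prod_{i\ne j}t_i^{d_i-1}$ to vanish on that smaller set, which is impossible for $c\neq 0$.
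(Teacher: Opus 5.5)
Your chart reduction and the case $d<\sum_{i\neq j}(d_i-1)$ are fine. The gap is in the boundary case $d=\sum_{i\neq j}(d_i-1)$, at the homogenization step.

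\textbf{The homogenized congruence is false.} Homogenizing $G=R+\sum_i u_i(t_i^{d_i}-t_i)$ in degree $d$ turns $t_i^{d_i}-t_i$ into $t_i^{d_i}-t_it_j^{d_i-1}\equiv t_i^{d_i}\pmod{t_j}$. But $t_i^{d_i}\notin(t_j)+I_\X$, since it equals $1$ at your $Q$. What you actually get is $g|_{t_j=0}=c\prod_{i\neq j}t_i^{d_i-1}+\sum_i u_i^{\mathrm{top}}t_i^{d_i}$, where $u_i^{\mathrm{top}}$ is the degree-$(d-d_i)$ part of $u_i$. This is a congruence modulo the monomial ideal $(t_i^{d_i}: i\neq j)$ only. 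Replacing $t_i^{d_i}$ by $t_i$ preserves values at $Q$, but it leaves lower-degree terms behind, so $g(Q)=c$ does not follow.

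\textbf{No single test point can work.} Your derivation of $g(Q)=c$ uses only the vanishing of $g$ on the chart $t_j\neq0$. Take $\mathbb{F}_2$, $n=2$, $d_0=d_1=d_2=2$, $j=0$, $P=(1:0:0)$, $d=2$, and the form $g'=(t_0+t_1)(t_0+t_2)+t_1^2+t_0t_1$.
\begin{itemize}
\item $g'$ has the same chart data as your $g$, with $R=(t_1+1)(t_2+1)$ and $c=1$.
\item $g'$ vanishes at every point of $\X\setminus\{P\}$ except $(0:1:0)$; in particular $g'(Q)=0$ at $Q=(0:1:1)$.
\item $g'(P)=1$.
\end{itemize}
So the contradiction has to come from other points of $\X\cap V(t_j)$, not from $Q$. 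Your inductive fallback does not supply this either. The lemma for fewer factors concerns forms vanishing at all but one point, in degree at most $\sum_{i\neq j,j'}(d_i-1)$. By contrast, $g|_{t_j=0}$ has degree $\sum_{i\neq j}(d_i-1)$ and already vanishes on all of $\X\cap V(t_j)$.

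\textbf{How to repair it.} Use all of $\X\cap V(t_j)$. Set $h:=g|_{t_j=0}$.
\begin{itemize}
\item $h$ vanishes on the whole Cartesian set $\prod_{i\neq j}K_i$: at $0$ because $d\geq 1$, and elsewhere because those points lie in $\X\setminus\{P\}$.
\item The coefficient of $\prod_{i\neq j}t_i^{d_i-1}$ in $h$ is $c$. The affine reduction only lowers degrees, so it never creates or removes this standard monomial of degree $d$.
\item $\prod_{i\neq j}t_i^{d_i-1}$ is the only standard monomial of degree $d$. Hence the normal form of $h$ modulo $\{t_i^{d_i}-t_i: i\neq j\}$ has top part $c\prod_{i\neq j}t_i^{d_i-1}\neq0$.
\end{itemize}
Therefore $h$ is not identically zero on $\prod_{i\neq j}K_i$, which is the required contradiction.

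\textbf{Comparison with the paper.} The paper needs no case split. It works on the affine cone $K_0\times\cdots\times K_n$, where $P$ has exactly $d_j-1$ representatives $\lambda p$ with $\lambda\in K_j^*$; this uses $K_j\subset K_i$. The footprint bound together with Carvalho's lemma forces any nonzero reduced polynomial of degree at most $\sum_{i\neq j}(d_i-1)$ to have at least $d_j$ nonzeros there. Keeping track of the scalar multiples is exactly what absorbs your boundary degree.
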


\begin{proof}
We suppose, by contradiction,
that $g$ is not the zero function on $\X$, so $g \notin I_\X$, and let
$\X^* = K_0 \times \cdots \times K_n$. We write the point $P$ in
standard form with $a_j=1$. The vanishing ideal 
$I_{\X^*}$ of $\X^*$ is generated by the set 
$\{ t_i^{d_i}-t_i : i = 0, \ldots, n\}$ 
and its initial
ideal is given by ${\rm in}_\prec(I_{\X^*})=( \{ t_i^{d_i} : i=0, \ldots, 
n\})$.  
Notice that $V_{\mathcal{X}^*}(g)$, the zero set of $g$ in $\mathcal{X}^*$, 
has
cardinality equal to $|\mathcal{X}^*|-(d_j-1)$. By the division
algorithm, we can write 
$$
g=h_1(t_0^{d_0}-t_0)+\cdots+h_n(t_n^{d_n}-t_n)+F,
$$
where $F(0)=0$, $\deg(F)\leq\deg(g)$, and $\deg_{t_i}(F)\leq d_i-1$
for all $i$. Thus, one has 
$|V_{\mathcal{X}^*}(F)|=|V_{\mathcal{X}^*}(g)|$ and $F$ has exactly
$d_j-1$ nonzeros in $\X^*$. The leading monomial of $F$ has the form 
$t_0^{\gamma_0} \cdots t_n^{\gamma_n}$, where $\gamma_i\leq d_i-1$
for all $i$. Hence, by the footprint bound
\cite[p.~331]{rth-footprint}, $F$ has at least $\prod_{i = 0}^n (d_i
- \gamma_i)$ 
nonzeros in $\X^*$. Thus, $\prod_{i = 0}^n (d_i - \gamma_i) \leq  d_j - 1$. 

On the other hand, we may determine a lower bound for
$\prod_{i=0}^n(d_i - \gamma_i)$ using
\cite[Lemma~2.1]{car-2013}, which states that
\begin{equation*}
\min\bigg\{\prod_{i=0}^m(e_i-c_i) : c_i\in\mathbb{N},\, 
\sum_{i=0}^m c_i\leq e,\, 0 \leq c_i < e_i\; \forall\; i \bigg\} 
=(e_{k}-\ell)\prod_{i=k+1}^m e_i,
\end{equation*}
where $k,\ell\in\mathbb{N}$ are uniquely defined by 
the conditions $e = \sum_{i=0}^{k-1}(e_i-1) + \ell$, with $1\leq
\ell < e_{k}-1$, and we take $\prod_{i=k+1}^m e_i=1$ in the case $k = m$.
Since $0 \leq \gamma_i \leq d_i - 1$ for all $i=0, \ldots, n$ and 
$\sum_{i = 0}^n \gamma_i \leq \displaystyle \sum_{\substack{i = 0 \\ i \neq 
j}}^{n}(d_i-1)$, writing
\[
\sum_{\substack{i = 0 \\ i \neq j}}^{n}(d_i-1) = 
\sum_{i = 0}^{n - 1} (d_i - 1) \; + \; d_n - d_j 
\]
we get that, in this case,  $k = n$ and $\ell = d_n - d_j$ (because $d_j 
\geq 2$)
so that $\prod_{i=0}^n(d_i - \gamma_i) \geq d_n - (d_n - d_j) = d_j$, a 
contradiction.
Hence, $g(P)=0$ and
$g$ must be the zero 
function on $\X$.
\end{proof}

\begin{remark} 
The above result generalizes \cite[Lemma~5.4]{rth-footprint} because, for 
all $j \in \{0, \ldots, n\}$, we have 
\[
\sum_{i = 0}^{n - 1} (d_i - 1) \leq \sum_{\substack{i = 0 \\ i \neq 
j}}^{n}(d_i-1)
\]
since $d_0 \leq \cdots \leq d_n$.
\end{remark} 

In what follows we will need to identify monomials in $\Delta(I_\X)$. The 
footprint $\Delta(I_\X)$ is composed of two types of monomials: powers of 
the variables and monomials with more than one variable, where the power of 
the variable with the least index has no upper bound, while the powers of 
the other variables is limited. More explicitly, we have
\begin{align*}
\Delta(I_\X) &= \{ t_i^{\alpha_i} : \alpha_i \geq 0, i = 0, \ldots, n\} \\
 &\cup \{ t_{i_1}^{\alpha_{i_1}} \cdots t_{i_\ell}^{\alpha_{i_\ell}} : 
 \ell \geq 2, \alpha_{i_1} \geq 1, 1 \leq \alpha_{i_j} \leq d_{i_j} - 1 
 \textrm{ 
 for all } j = 2, \ldots, \ell \}.
\end{align*}

Let $f$ be the indicator function of $[e_j]$ of
Proposition~\ref{ind-funct}, $j=0,\ldots,n$. As is seen below, in this case $f$ has a 
simple expression.  
We observe that for $j=0$ and $j=n$ the monomials of $f$ are in
$\Delta(I_\X)$ while some monomials in $f$, for $j = 1, \ldots, n - 1$  are not in 
$\Delta(I_\X)$. 
For example, if $j = 1$ we have the monomial $t_0^{d_1 - 1} t_1^{1 + \sum_{i 
= 2}^n (d_i - 1)}$ which is not in $\Delta(I_\X)$.

The next result and its proof describe the monomials of $f$, 
and give constraints for the exponent of $t_j$ for 
any monomial in the remainder of $f$ on division by $\mathcal{G}$. 
The proof keeps track of the exponents of $t_j$
in the division algorithm. 
 
\begin{proposition} \label{powers-xj}
Let $f$ be the indicator function of $[e_j]$ which appears in
Proposition~\ref{ind-funct}, 
where $j=0,\ldots,n$, and let $h_j:={\rm NF}_\mathcal{G}(f)$ be the
remainder of $f$ on division by $\mathcal{G}$.  
Then, the power of $t_j$ in each monomial of $h_j$ is either $1$ or
an integer 
greater than $d_j-1$.
\end{proposition}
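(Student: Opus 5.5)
We first write out $f$ explicitly for $P=[e_j]$, i.e.\ all $a_i=0$ for $i>j$. For $j=0$ we get
\[
f=t_0\prod_{i=1}^n\prod_{\lambda\in K_i^*}(t_i-\lambda t_0)=t_0\prod_{i=1}^n(t_i^{d_i-1}-t_0^{d_i-1}).
\]
For $j\geq 1$ (with $m_1=1$ and the convention $f_1-f_2=t_1^{d_1-1}-t_0^{d_1-1}$ when $j=1$) we get
\[
f=t_j(f_1-f_2)\prod_{\ell=j+1}^n(t_\ell^{d_\ell-1}-t_j^{d_\ell-1}).
\]
The cases $j=0$ and $j=n$ are immediate. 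As already observed, all monomials of $f$ are then standard, so $h_j=f$. In $f$ the power of $t_j$ is $1$ when $j=n$ (since $f_1-f_2$ is $t_n^{m_n(d_n-1)}$ plus terms free of $t_n$). When $j=0$ the power of $t_0$ is $1+\sum_{i\in A}(d_i-1)$ for subsets $A$, which is either $1$ or at least $d_1\geq d_0$. (One checks the multiplicity of $t_0$; if it is literally $d_0-1+1$, this is "greater than $d_0-1$".)

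For $1\le j\le n-1$ we expand $f$ into monomials. Each monomial has the form
\[
t_j\cdot M\cdot t_j^{\sum_{\ell\in B}(d_\ell-1)}\prod_{\ell\in C}t_\ell^{d_\ell-1},
\]
where $B\sqcup C=\{j+1,\dots,n\}$ and $M$ is a monomial of $f_1-f_2$. In $f_1-f_2$ only $t_0,\dots,t_j$ occur. The only monomial involving $t_j$ is $t_j^{m_j(d_j-1)}$; all others lie in $K[t_0,\dots,t_{j-1}]$. Hence the $t_j$-exponent of a monomial of $f$ is $1+\epsilon\, m_j(d_j-1)+\sum_{\ell\in B}(d_\ell-1)$ with $\epsilon\in\{0,1\}$. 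Since $d_\ell-1\geq d_j-1$ for $\ell>j$, this exponent is either $1$ or $\geq d_j$. So the property holds for $f$ itself, and the task is to show it survives reduction modulo $\mathcal{G}$.

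The key step is to track a single reduction step. Take $u=t_it_k(t_k^{d_k-1}-t_i^{d_k-1})$ with $i<k$, whose leading monomial is $t_it_k^{d_k}$ in graded lex with $t_0\prec\cdots\prec t_n$. If a monomial $N=t^\alpha$ is divisible by $t_it_k^{d_k}$, the step replaces $N$ by $N\,t_i^{d_k-1}/t_k^{d_k-1}$. There are three cases.
\begin{itemize}
\item If $j\notin\{i,k\}$, the $t_j$-exponent is unchanged.
\item If $k=j$, the exponent $\alpha_j\geq d_j+1$ drops to $\alpha_j-(d_j-1)\geq 2$.
\item If $i=j$, the exponent $\alpha_j\geq 1$ rises to $\alpha_j+d_k-1\geq d_j$.
\end{itemize}
Thus the class "exponent $1$ or $\geq d_j$" is preserved in the first and third cases, and the danger is the case $k=j$. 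There an exponent $\alpha_j\geq d_j+1$ might drop into $[2,d_j-1]$.

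I expect this case $k=j$ to be the main obstacle, and I would handle it by a finer invariant. Each monomial arising during the division will be shown to keep the $t_j$-exponent congruent to $1$ modulo $d_j-1$, i.e.\ of the form $1+c(d_j-1)$. This works because $d_j-1\mid d_\ell-1$ for all $\ell\geq j$, by the nested field hypothesis, and $m_j(d_j-1)$ is a multiple of $d_j-1$. The congruence is preserved by all three kinds of steps: the exponent changes by $0$, by $-(d_j-1)$, or by $+(d_k-1)$ with $k>j$. Since exponents stay positive, every exponent is then $1$ or $1+c(d_j-1)\geq d_j>d_j-1$.

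Finally, the invariant must hold at the start, i.e.\ for the monomials of $f$ with $t_j$ exponent $1+\epsilon m_j(d_j-1)+\sum_{B}(d_\ell-1)$, and it does. Since $h_j$ consists of monomials produced by such steps (cancellations only remove monomials), the statement follows.
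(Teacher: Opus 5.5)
Your proof is correct, but it takes a different route from the paper's.

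\textbf{The paper's route.} The paper expands $f$ into products of monomials of $t_jf_1$ (resp.\ $t_jf_2$) with monomials $t_j^{\alpha_j}\cdots t_n^{\alpha_n}$ of $\prod_{\ell>j}(t_\ell^{d_\ell-1}-t_j^{d_\ell-1})$, where $\alpha_j\in\{0,m(d_j-1)\}$. It then computes the normal form of each such product explicitly, in cases (a)--(d) and (A)--(B), by repeated division by $t_it_j^{d_j}-t_i^{d_j}t_j$. This shows directly that every monomial of $h_j$ has $t_j$-exponent $1$ or $(m+m_j)(d_j-1)+1$ for some $m\geq 0$. In fact the exponent is exactly $1$ for everything coming from $f_2$.

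\textbf{Your route.} You observe that each reduction by a binomial of $\mathcal{G}$ sends a monomial to a single monomial. The $t_j$-exponent changes by $0$, by $-(d_j-1)$, or by $+(d_k-1)$ with $k>j$. Hence the condition ``$\alpha_j\geq 1$ and $\alpha_j\equiv 1\pmod{d_j-1}$'' is preserved. Since every monomial of $f$ satisfies it, and cancellation only removes monomials, every monomial of $h_j$ satisfies it as well.

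\textbf{Comparison.} Both arguments rest on the same arithmetic fact, $d_j-1\mid d_\ell-1$ for $\ell\geq j$. Your invariant computes no remainders and is independent of the order of reductions. It also needs no check of which intermediate monomials are standard, and it treats all $j$ uniformly, so your separate discussion of $j=0$ and $j=n$ is unnecessary. The paper's computation, in exchange, gives the explicit normal forms.

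\textbf{Two small slips, neither of which affects the argument.}
\begin{itemize}
\item In the case $k=j$, divisibility by $t_it_j^{d_j}$ only gives $\alpha_j\geq d_j$. The new exponent is therefore $\geq 1$ (it equals $1$ when $\alpha_j=d_j$), not $\geq 2$. Positivity is all you actually use.
\item For $j=n$, the monomial $t_n^{m_n(d_n-1)+1}$ is an exception to ``the power of $t_j$ is $1$'', but your invariant covers it.
\end{itemize}
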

\begin{proof} We can write the indicator functions of Proposition
\ref{ind-funct},  
in the case where  $P = [e_j]$ for $j = 0, \ldots, n$, as follows: 
\[ 
f = t_0 \prod_{i = 1}^n  (t_i^{d_i - 1} -  t_0^{d_i - 1}) \;\;\; 
\textrm{if } j = 0; 
\] 
\[ 
f = t_1 (t_1^{d_1 - 1} - t_0^{d_1 - 1}) \prod_{i = 2}^n  
(t_i^{d_i - 1} -  t_1^{d_i - 1}) \;\;\; 
\textrm{if } j = 1; 
\] 
\[
f = t_j (f_1 - f_2) \; \prod_{\ell = j + 1}^n  
(t_\ell^{d_\ell - 1} -  t_j^{d_\ell - 1}) \;\;\; 
\textrm{if } 2 \leq j \leq n - 1, 
\]
where
\begin{equation} \label{f1}
\begin{split}
f_1 =& t_j^{m_j(d_j - 1)} - \sum_{i = 1}^{j - 1} t_i^{m_j(d_j - 1)}  \\ & + 
\sum_{s = 2}^{j - 
1} \sum_{1 \leq j_1 < \cdots < j_s \leq j - 1}
(-1)^s t_{j_1}^{m_j(d_j - 1) - \sum_{i = 2}^s (d_{j_i} \, - \, 1) } \; 
t_{j_2}^{d_{j_2}\, - 
\,1} \cdots t_{j_s}^{d_{j_s} \, - \, 1} ,
\end{split} 
\end{equation}
	
\[
f_2 = t_0^{m_j(d_j - 1) - \sum_{i = 1}^{j - 1} (d_{i} \, - \, 1)  } \prod_{i 
= 1}^{j - 1} 
(t_0^{d_{i} \, - \, 1} - t_i^{d_{i} \, - \, 1}),
\]
and
\[
f =  t_n (f_1 - f_2) \;\;\;  \textrm{if } j = n.
\]

Let $f$ be the indicator function for $[e_j]$ written above. 
If $j = n$ one easily checks that $h_n = t_n(f_1 - f_2)$ is a standard 
indicator function, and the power of $t_n$ in each monomial distinct from 
$t_n^{m_n(d_n - 1) + 1}$ is equal to 1, 
so the proposition holds in this case.

Assume now that $j \in \{0, \ldots, n - 1\}$.
We claim that 
the monomials of $\prod_{\ell = j + 1}^n  
(t_\ell^{d_\ell - 1} -  t_j^{d_\ell - 1})$ are of the form $t_j^{\alpha_j} 
\cdots  t_n^{\alpha_n}$ where:
\begin{equation}\label{apr6-26}
\left\{
\begin{array}{ll}
\alpha_j \in \{0, m(d_j - 1) \} & \textrm{ with } m > 0  \, ; \\
\alpha_\ell \in \{0, d_\ell - 1\} & \textrm{ if } \ell \in \{j+1, \ldots, 
n\} .
\end{array}
\right.
\end{equation}
The claim about $\alpha_\ell$, with $\ell \in \{j + 1, \ldots, n\}$ is 
clear. And, of course, the power of $t_j$ is zero 
in the monomial $t_{j + 1}^{d_{j + 1} - 1} \cdots t_n^{d_n - 1}$. From $K_j 
\subset K_\ell$  we get that $d_j - 1 \mid d_\ell - 1$, where $\ell \in \{j 
+ 1, \ldots, n\}$. Thus, $t_j$ appears in the monomials distinct from 
$t_{j + 1}^{d_{j + 1} - 1} \cdots t_n^{d_n - 1}$ with a power of the form $m 
(d_j - 1)$, where $m > 0$ depends on the monomial. This finishes the proof 
of the claim.

Inspecting $h_0 = t_0 \prod_{i = 1}^n  (t_i^{d_i - 1} -  t_0^{d_i -
1})$, we 
see that this is a standard indicator function and, 
except for the monomial $t_0 \prod_{i = 1}^n  t_i^{d_i - 1}$, 
the power of $t_0$ in 
a monomial of $h_0$ is of the form $m(d_0 - 1) + 1$, where $m > 0 $ depends on the 
monomial,  so 
the proposition also holds for $j = 0$.

Now, for $j \in \{1, \ldots, n - 1\}$,  we want to determine the monomials 
which appear in the remainder of the 
division of $t_j f_1 \prod_{\ell = j + 1}^n  (t_\ell^{d_\ell - 1} -  
t_j^{d_\ell - 1})$ by the polynomials in 
\[
\mathcal{G} := \left\{ t_i t_j (t_j^{d_j - 1} - t_i^{d_j - 1}), \;  i < j ; 
i , j = 0,\ldots,n 
\right\}
\]
where $f_1$ is the polynomial in Eq.~\eqref{f1}. Looking at the product 
of 
monomials $t_j^{\alpha_j} 
\cdots  t_n^{\alpha_n}$ that appear in  
$\prod_{\ell = j + 1}^n  
(t_\ell^{d_\ell - 1} -  t_j^{d_\ell - 1})$, which were described in 
Eq.~\eqref{apr6-26}, 
with monomials which appear in $t_j f_1$, we have:\\
a) if $\alpha_j=0$, the product of $t_{j + 1}^{d_{j + 1} - 1} \cdots 
t_n^{d_n - 1}$
and the monomials of $t_jf_1$ yield only monomials which are in 
$\Delta(I_\X)$. The power of $t_j$ in these monomials is always equal to 1, 
except for the monomial $t_j^{m_j(d_j - 1) + 1}  t_{j + 1}^{d_{j + 1} - 1} 
\cdots t_n^{d_n - 1}$; \\
b) if $\alpha_j = m (d_j - 1)$, the product of a monomial $t_j^{\alpha_j} 
\cdots  t_n^{\alpha_n}$ with  $t_j 
t_j^{m_j (d_j 
- 1)}$ is equal to $t_j^{(m + m_j)(d_j - 1) + 1} t_{j + 1}^{\alpha_{j+1}}
\cdots  t_n^{\alpha_n}$ and it is in $\Delta(I_\X)$; \\
c) if $\alpha_j = m (d_j - 1)$, the product of a monomial $t_j^{\alpha_j} 
\cdots  t_n^{\alpha_n}$ with  $t_j 
t_i^{m_j(d_j - 1)}$, where $i \in \{1, \ldots, j - 1\}$ is equal to 
$t_j^{m(d_j - 1) + 1} t_i^{m_j(d_j - 1)}t_{j+1}^{\alpha_{j+1}}\cdots
t_n^{\alpha_n}$. 
This monomial is not in 
$\Delta(I_\X)$ and its remainder in the division by $\mathcal{G}$ (or, more 
precisely, by $t_i t_j^{d_j} - t_i^{d_j} t_j$) is equal to 
$$t_i^{(m + m_j)(d_j - 1)} t_jt_{j+1}^{\alpha_{j+1}}\cdots
t_n^{\alpha_n},$$
we arrive at this by repeated application 
of divisions like 
\begin{align*}
t_j^{m(d_j - 1) + 1} t_i^{m_j(d_j - 1)} &= t_j^{(m - 1)(d_j - 1)} 
t_i^{m_j(d_j - 1) - 1} (t_i t_j^{d_j} - t_i^{d_j} t_j) \\ &+ 
t_j^{(m - 1)(d_j - 1) + 1} t_i^{(m_j+ 1)(d_j - 1)}
; 
\end{align*}
d) the product of a monomial $t_j^{\alpha_j} 
\cdots  t_n^{\alpha_n}$,  where $\alpha_j = m (d_j - 1)$, with  
\[
t_j t_{j_1}^{m_j(d_j - 1) - \sum_{i = 2}^s (d_{j_i} \, - \, 1) } \; 
t_{j_2}^{d_{j_2}\, - 
\,1} \cdots t_{j_s}^{d_{j_s} \, - \, 1},
\]
 where
$1 \leq j_1 < \cdots < j_s \leq j - 1$ and $s \in \{2, \ldots, j - 1\}$, 
is equal to 
\[
 t_{j_1}^{m_j(d_j - 1) - \sum_{i = 2}^s (d_{j_i} \, - \, 1) } \; 
t_{j_2}^{d_{j_2}\, - \,1} \cdots t_{j_s}^{d_{j_s} \, - \, 1} t_j^{m(d_j - 1) 
+ 1} 
t_{j+1}^{\alpha_{j+1}} \cdots t_n^{\alpha_n}.
\]
This monomial is not in 
$\Delta(I_\X)$ and, as above, it is easy to check that its remainder in the 
division by $\mathcal{G}$ (or, more 
precisely, by $t_{j_1} t_j^{d_j} - t_{j_1}^{d_j} t_j$) is
\[
 t_{j_1}^{(m_j + m)(d_j - 1) - \sum_{i = 2}^s (d_{j_i} \, - \, 1) } \; 
t_{j_2}^{d_{j_2}\, - \,1} \cdots t_{j_s}^{d_{j_s} \, - \, 1} t_j  
t_{j+1}^{\alpha_{j+1}} \cdots t_n^{\alpha_n}.
\]
Thus, in all cases we have that the power of $t_j$ in the monomials of the 
remainder is either 1 or greater than $d_j - 1$.

In the same way, we now prove that all the monomials which appear in the 
remainder of the division of  $t_j f_2 \prod_{\ell = j + 1}^n  
(t_\ell^{d_\ell - 1} -  t_j^{d_\ell - 1})$ by $\mathcal{G}$ are multiples of 
$t_j$ but not of $t_j^2$.

The monomials of 
$\prod_{\ell = j + 1}^n  
(t_\ell^{d_\ell - 1} -  t_j^{d_\ell - 1})$, which are described in 
Eq.~\eqref{apr6-26}, can be written as 
$t_j^{\alpha_j}\cdots t_n^{\alpha_n}$, where
$\alpha_j\in\{0,m(d_j-1)\}$ with  $m>0$ and
$\alpha_\ell\in\{0,d_\ell-1\}$ for $\ell=j+1,\ldots,n$.  

A) Assume that $\alpha_j=0$. The product of $t_{j+1}^{\alpha_{j+1}}\cdots 
t_n^{\alpha_n}$
and the monomials of $t_jf_2$ yield only monomials which are in 
$\Delta(I_\X)$, and the power of $t_j$ in these monomials is always equal
to $1$. 

B) Assume that $\alpha_j = m (d_j - 1)$ with $m>0$. The monomials of
$t_jf_2$ are of the form
\[
t_j t_0^{m_j(d_j - 1) - \sum_{i =0}^{j-1}n_i(d_i-1)} \; 
t_1^{\alpha_1} \cdots t_{j-1}^{\alpha_{j-1}},
\] 
where $n_i\in\{0,1\}$ for $i=0,\ldots,j-1$ and $\alpha_i\leq d_i-1$
for $i=1,\ldots,j-1$. Thus the product of a monomial 
$t_j^{\alpha_j}\cdots t_n^{\alpha_n}$ of $\prod_{\ell = j + 1}^n  
(t_\ell^{d_\ell - 1} -  t_j^{d_\ell - 1})$ and a monomial of $t_jf_2$
has the form
\[
t_j t_0^{m_j(d_j - 1) - \sum_{i =0}^{j-1}n_i(d_i-1)} \; 
t_1^{\alpha_1} \cdots t_{j-1}^{\alpha_{j-1}}t_j^{\alpha_j}
t_{j+1}^{\alpha_{j+1}}\cdots t_n^{\alpha_n}.
\] 
\quad This monomial is not in 
$\Delta(I_\X)$ and its remainder in the division by $\mathcal{G}$ (or, more 
precisely, by $t_0 t_j^{d_j} - t_0^{d_j} t_j$) is equal to
\[
t_0^{(m_j+m)(d_j - 1) - \sum_{i =0}^{j-1}n_i(d_i-1)} \; 
t_1^{\alpha_1} \cdots t_{j-1}^{\alpha_{j-1}}t_j
t_{j+1}^{\alpha_{j+1}}\cdots t_n^{\alpha_n},
\] 
we obtain this by a repeated application 
of divisions like 
\begin{align*}
&t_0^{m_j(d_j - 1) - \sum_{i =0}^{j-1}n_i(d_i-1)} \; 
t_j^{m(d_j-1)+1}=\\
& (t_0 t_j^{d_j} - t_0^{d_j} t_j)t_0^{m_j(d_j - 1) - \sum_{i
=0}^{j-1}n_i(d_i-1)-1} \; 
t_j^{(m-1)(d_j-1)}   \\ &+ 
t_0^{(m_j+1)(d_j - 1) - \sum_{i
=0}^{j-1}n_i(d_i-1)} \; 
t_j^{(m-1)(d_j-1)+1}.
\end{align*}

Thus, in both cases the power of $t_j$ in the monomials of the 
remainder is $1$.
\end{proof}

\begin{lemma}\label{feb24-26-bis} If $t^\beta=t_k^{\beta_k}\cdots 
t_n^{\beta_n}$,
$\beta_k\geq 1$, is a standard monomial of $I_\X$ such that $t_j$
divides $t^\beta$ and 
$t_j t^{\beta}\notin\Delta(I_{\X})$, then $k<j$ and $\beta_j=d_j-1$. 
\end{lemma}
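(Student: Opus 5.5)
The plan is to use the explicit description of the footprint $\Delta(I_\X)$ given above. It is a direct consequence of the Gr\"obner basis $\mathcal{G}$ in \eqref{basedeG}: the leading monomial of $t_it_j(t_j^{d_j-1}-t_i^{d_j-1})$, with $i<j$, is $t_it_j^{d_j}$ in graded lex with $t_0\prec\cdots\prec t_n$. Hence a monomial is standard if and only if it is a pure power of one variable, or every variable other than the one of least index occurring in it appears with exponent at most $d_\ell-1$. Write $t^\beta=t_k^{\beta_k}\cdots t_n^{\beta_n}$ with $\beta_k\geq 1$, so $t_k$ is the variable of least index in $t^\beta$. Since $t_j\mid t^\beta$, we have $j\geq k$ and $\beta_j\geq 1$.

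First I rule out $j=k$. Multiplying by $t_k$ only raises the exponent of the least-index variable, so $t_kt^\beta$ has the same support as $t^\beta$ and the same least-index variable $t_k$. All other exponents are unchanged, hence still at most $d_\ell-1$; if $t^\beta$ was a pure power, $t_kt^\beta$ is again a pure power. By the footprint description, $t_kt^\beta\in\Delta(I_\X)$, which contradicts the hypothesis. Thus $k<j$.

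Next, with $k<j$, the monomial $t^\beta$ involves at least the two variables $t_k$ and $t_j$, so it is not a pure power. Standardness of $t^\beta$ gives $1\leq\beta_\ell\leq d_\ell-1$ for every $\ell>k$ with $\beta_\ell>0$, and in particular $\beta_j\leq d_j-1$. The monomial $t_jt^\beta$ has the same support and least-index variable $t_k$, and differs from $t^\beta$ only in the exponent of $t_j$, which becomes $\beta_j+1$. It is non-standard, so some non-least exponent must exceed its bound; the only exponent that changed is that of $t_j$, so $\beta_j+1>d_j-1$. Combined with $\beta_j\leq d_j-1$, this gives $\beta_j=d_j-1$.

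The only point needing care is justifying the footprint description, namely that the initial ideal is generated exactly by the monomials $t_it_j^{d_j}$ with $i<j$. This follows because $\mathcal{G}$ is a Gr\"obner basis, so I do not expect a real obstacle; the argument is a short case check.
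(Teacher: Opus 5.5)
Your proof is correct and follows essentially the same route as the paper: both read off from the leading monomials $t_it_j^{d_j}$ ($i<j$) of $\mathcal{G}$ that $t_kt^\beta$ is standard, which forces $k<j$, then use standardness of $t^\beta$ to get $\beta_j\le d_j-1$ and non-standardness of $t_jt^\beta$ to rule out $\beta_j<d_j-1$. You simply spell out the case checks, such as $j\geq k$ and the explicit footprint description, that the paper leaves implicit.
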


\begin{proof} As $t_k t^\beta\in\Delta(I_\X)$, one has $k<j$.
Therefore, $\beta_j\leq d_j-1$ since $t^\beta\in\Delta(I_X)$ and 
$t_kt_j^{d_j}-t_k^{d_j}t_j\in\mathcal{G}$. If $\beta_j<d_j-1$, then
$\beta_j+1\leq d_j-1$ and
$t_j t^\beta\in \Delta(I_\X)$, a contradiction. Thus,
$\beta_j=d_j-1$. 
\end{proof}

\begin{lemma}\label{feb24-26-1-bis} Let $P = (a_0 : \cdots : a_n) \in
\X$, let $g$ be a standard indicator 
function for $P$, let 
$j$ be the least integer such 
that $a_j \neq 0$, let $t^\alpha$ be any of the standard monomials that 
occur in $g$,
and let ${\rm NF}_{\mathcal{G}}(t_jg)$ be the remainder of $t_jg$ on
division by $I_\X$. 
 The following hold. 
\begin{enumerate}
\item[\rm(a)] $t_j$ divides $g$.
\item[\rm(b)] If $t^\alpha=t_0^{\alpha_0}\cdots t_n^{\alpha_n}$ and 
$\alpha_j\leq d_j-2$, then
$t_j t^{\alpha}\in\Delta(I_{\X})$ and $t_j t^\alpha$ appears in ${\rm
NF}_{\mathcal{G}}(t_jg)$.
\end{enumerate}
\end{lemma}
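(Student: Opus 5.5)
Part (a) follows directly from Proposition~\ref{xj-div-f}. That result applies to any standard indicator function $h$ of $P$ and any index $j$ with $a_j\neq 0$, so it applies to $g$ and the least such $j$, giving $t_j\mid g$. The argument there is the relevant one: write $g=t_jg_1+g_2$ with $t_j$ not occurring in $g_2$. Then $g_2$ vanishes on $\X_{\widehat{j}}$, hence lies in $I_\X$. Since every monomial of $g_2$ is standard, $g_2=0$.

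For part (b) I first show that $t_jt^\alpha$ is standard, arguing by contradiction. By (a), $t_j$ divides every monomial of $g$, in particular $t^\alpha$. Write $t^\alpha=t_k^{\alpha_k}\cdots t_n^{\alpha_n}$ with $\alpha_k\geq 1$, where $k$ is the least index occurring. If $t_jt^\alpha\notin\Delta(I_\X)$, Lemma~\ref{feb24-26-bis} gives $k<j$ and $\alpha_j=d_j-1$. This contradicts $\alpha_j\leq d_j-2$, so $t_jt^\alpha\in\Delta(I_\X)$. Alternatively, one can check this directly from the explicit description of $\Delta(I_\X)$: raising the exponent of $t_j$ from $\alpha_j\le d_j-2$ to $\alpha_j+1\le d_j-1$ keeps the monomial within the bounds whether or not $t_j$ is the variable of least index.

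The remaining claim is that $t_jt^\alpha$ survives in ${\rm NF}_{\mathcal{G}}(t_jg)$, and this is the main obstacle. I write $t_jg=\sum_{\beta}c_\beta\, t_jt^\beta$ over the monomials $t^\beta$ of $g$, and split the sum into the terms with $t_jt^\beta$ standard and those without. The first group passes unchanged into the normal form, and the normal form is linear, so I must show that reducing the nonstandard terms $t_jt^\beta$ never produces the monomial $t_jt^\alpha$.

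By Lemma~\ref{feb24-26-bis}, each nonstandard $t_jt^\beta$ has $k_\beta<j$ and $\beta_j=d_j-1$, so $t_jt^\beta$ has $t_j$-exponent exactly $d_j$. Since $g$ is standard, $\beta_i\le d_i-1$ for every $i>k_\beta$, $i\neq j$, so the only violation of standardness is $t_j^{d_j}$ together with the smallest variable $t_{k_\beta}$. One reduction by $t_{k_\beta}t_j(t_j^{d_j-1}-t_{k_\beta}^{d_j-1})$ replaces $t_{k_\beta}^{\beta_{k_\beta}}t_j^{d_j}$ by $t_{k_\beta}^{\beta_{k_\beta}+d_j-1}t_j$. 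The result is standard, with $t_j$-exponent $1$ and all other exponents unchanged, so the normal form of $t_jt^\beta$ is a single monomial.

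It remains to compare this monomial with $t_jt^\alpha$. If $\alpha_j+1\geq 2$, the $t_j$-exponents already differ. If $\alpha_j=0$ this cannot occur, since $t_j\mid t^\alpha$. If $\alpha_j=1$, so that $t_jt^\alpha$ has $t_j$-exponent $2$, the exponents again differ. So the normal forms of the nonstandard terms have $t_j$-exponent $1$, while $t_jt^\alpha$ has $t_j$-exponent $\alpha_j+1\ge 2$, and no cancellation can occur. Finally, distinct standard $t^\beta$ give distinct $t_jt^\beta$, so the coefficient of $t_jt^\alpha$ in ${\rm NF}_{\mathcal{G}}(t_jg)$ is $c_\alpha\neq0$.
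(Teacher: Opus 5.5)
Your proof is correct and is essentially the paper's argument. Part (a) comes from Proposition~\ref{xj-div-f}. For (b), you use Lemma~\ref{feb24-26-bis} and a single reduction by $t_kt_j(t_j^{d_j-1}-t_k^{d_j-1})$ to show that each nonstandard $t_jt^\beta$ has as normal form a single standard monomial with $t_j$-exponent $1$, which cannot cancel $t_jt^\alpha$ (exponent $\alpha_j+1\geq 2$). You also spell out two points the paper leaves tacit: that $t_jt^\alpha$ is standard (again by Lemma~\ref{feb24-26-bis}), and that linearity of ${\rm NF}_{\mathcal{G}}$ gives $t_jt^\alpha$ the coefficient $c_\alpha\neq 0$.
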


\begin{proof} (a) By Proposition~\ref{xj-div-f}, we get that $t_j$ divides 
$g$.

(b) Let $t^\beta=t_k^{\beta_k}\cdots t_n^{\beta_n}$,
$\beta_k\geq 1$, be any monomial that occur in $g$ such that 
$t_j t^{\beta}\notin\Delta(I_{\X})$. Then, by part (a) and 
Lemma~\ref{feb24-26-bis}, 
$k<j$ and $\beta_j=d_j-1$. It suffices to show the equality 
$$ 
{\rm 
NF}_\mathcal{G}(t_j t^\beta)=t_k^{d_j+\beta_k-1}t_{k+1}^{\beta_{k+1}}\cdots
t_{j-1}^{\beta_{j-1}}t_jt_{j+1}^{\beta_{j+1}}\cdots t_{n}^{\beta_{n}}
$$
because this proves that the standard monomial
$t_j t^{\alpha}$ will not appear as a remainder at any step of the 
division of $t_jg$ by $\mathcal{G}$ since $t_j^2$ divides $t_jg$  by
part (a), and consequently $t_j^2$ divides $t_j t^\alpha$ and $t_j t^\alpha$ 
appears in ${\rm
NF}_{\mathcal{G}}(t_jg)$. The equality follow from
\begin{align*}
&t_jt^\beta-(t_kt_j^{d_j}-t_k^{d_j}t_j)(t_k^{\beta_k-1}t_{k+1}^{\beta_{k+1}}\cdots
t_{j-1}^{\beta_{j-1}}t_{j+1}^{\beta_{j+1}}\cdots t_{n}^{\beta_{n}})\\
&=
t_k^{d_j+\beta_k-1}t_{k+1}^{\beta_{k+1}}\cdots
t_{j-1}^{\beta_{j-1}}t_jt_{j+1}^{\beta_{j+1}}\cdots t_{n}^{\beta_{n}}
\end{align*}
and noticing that $t_k^{d_j+\beta_k-1}t_{k+1}^{\beta_{k+1}}\cdots
t_{j-1}^{\beta_{j-1}}t_jt_{j+1}^{\beta_{j+1}}\cdots 
t_{n}^{\beta_{n}}\in\Delta(I_\X)$. 
\end{proof}

\begin{theorem}\label{cicero-maria-vila-ei}
Let $j \in \{0, \ldots, n\}$ and let $P = [e_j] \in \X$. If $f$ is the 
indicator of $P$ that 
appears in Proposition \ref{ind-funct}, then $f$ 
has minimal degree among all
indicator functions of $P$ and 
$$
{\rm v}_P(I_\X)=\deg(f)=\begin{cases}
{\rm reg}(H_\X)=1 + \sum_{i = 1}^n (d_i - 1)&\mbox{if } j\in\{0,1\},\\
 m_j (d_j - 1) + 1 + \sum_{i = j + 1}^n (d_i - 1)&\mbox{if }j \in
 \{2, \ldots, n-1\},\\
m_n(d_n-1)+1&\mbox{if }j=n,
\end{cases}
$$
where $m_j$ is the least integer $m$ such that 
$m(d_j - 1) > \sum_{i = 1}^{j - 1} (d_i - 1)$.
\end{theorem}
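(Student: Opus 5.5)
The upper bound ${\rm v}_P(I_\X)\le \deg(f)$ is already available: Proposition~\ref{ind-funct} says $f$ is an indicator function of $[e_j]$, and by \cite[Lemma~3.2(b)]{villa2024} ${\rm v}_P(I_\X)$ is the least degree of an indicator function. The explicit values of $\deg(f)$ in the three cases are read off from Proposition~\ref{ind-funct} and Lemma~\ref{lemma1.1}. So the whole work is the lower bound $\deg(g)\ge\deg(f)$ for every indicator function $g$ of $P=[e_j]$.

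\emph{Case $j=0$.} This should follow at once from Lemma~\ref{zero-function}. A homogeneous polynomial of degree $d$ with $1\le d\le\sum_{i=1}^n(d_i-1)$ that vanishes on $\X\setminus\{e_0\}$ also vanishes at $e_0$. Hence ${\rm v}_{e_0}(I_\X)\ge 1+\sum_{i=1}^n(d_i-1)=\deg(f)$. For $j\ge1$ the same lemma only gives ${\rm v}_P(I_\X)\ge 1+\sum_{i\ne j}(d_i-1)$, which is too weak, so a different argument is needed.

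\emph{Cases $j\ge 1$.} Let $g$ be the standard indicator function of $P$ of minimal degree $v={\rm v}_P(I_\X)$, and suppose $v<\deg(f)$. Put $e=\deg(f)-v$. By Proposition~\ref{xj-div-f} and Lemma~\ref{feb24-26-1-bis}(a), $t_j\mid g$ but $t_j^2\nmid g$, so some monomial $t^\alpha$ of $g$ has $\alpha_j=1$. The polynomial $t_j^e g$ is again an indicator function of $P$, now of degree $\deg(f)$. Rescaling so that it agrees with $f$ at $P$, the difference $t_j^e g-cf$ vanishes on all of $\X$, hence lies in $I_\X$. Therefore
\[
{\rm NF}_\mathcal{G}(t_j^e g)=c\,h_j ,
\]
where $h_j={\rm NF}_\mathcal{G}(f)$. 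By Proposition~\ref{powers-xj}, every monomial of $h_j$ has $t_j$-exponent either $1$ or greater than $d_j-1$.

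I would then apply Lemma~\ref{feb24-26-1-bis}(b) repeatedly, multiplying by one $t_j$ at a time. At each step, the monomial descended from $t^\alpha$ has $t_j$-exponent at most $d_j-2$, so after multiplying by $t_j$ it is still standard and survives in the normal form. The reduction of the other terms only produces monomials with $t_j$-exponent exactly $1$, as in the divisions computed in the proof of Lemma~\ref{feb24-26-1-bis}, so it cannot cancel this monomial. Thus, provided $1\le e\le d_j-2$, the normal form ${\rm NF}_\mathcal{G}(t_j^e g)$ contains the monomial $t_j^{1+e}\,t^{\alpha}/t_j$, whose $t_j$-exponent $1+e$ lies in $[2,d_j-1]$. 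This contradicts Proposition~\ref{powers-xj}.

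\emph{Main obstacle.} The argument above only covers $e\le d_j-2$. When $e\ge d_j-1$, the exponent of $t_j$ reaches $d_j-1$ and Lemma~\ref{feb24-26-bis} lets it fall back to $1$, so the contradiction disappears. My first attempt would be to replace $t_j^e$ by a multiplier $t_0^{a}t_j^{b}$ with $b\equiv e \pmod{d_j-1}$ and $1\le b\le d_j-2$. This is again an indicator function of degree $\deg(f)$, since $t_0(P)$ is irrelevant only if $t_0$ does not vanish at $P$. That fails for $P=e_j$, so instead I would use $t_k$ for a suitable $k>j$ after the change of coordinates of Lemma~\ref{bridge-p-p1} and Theorem~\ref{p-ej}, where $t_k$ is nonzero at the transformed point. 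This requires the exponent bookkeeping of Proposition~\ref{powers-xj} for the transformed $f$.

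Failing that, I would bound $e$ directly. Using the inequalities $(m_{j+1}-1)(d_{j+1}-1)\le m_j(d_j-1)$ from the proof of Lemma~\ref{min-deg}, I would try to show that the exponent pattern of $h_j$ (powers $m(d_j-1)+1$ coming from \eqref{apr6-26}) forces $v\equiv\deg(f)\pmod{d_j-1}$ together with $v>\deg(f)-(d_j-1)$. Combined with the above, this would give $e=0$.
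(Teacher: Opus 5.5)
Your argument for $1\le e\le d_j-2$ is sound. Comparing the normal forms of $t_j^e g$ and $f$ (both indicator functions of $[e_j]$ of degree $\deg(f)$) and iterating Lemma~\ref{feb24-26-1-bis}(b), starting from a monomial of $g$ with $t_j$-exponent $1$, does produce a monomial of $h_j$ with $t_j$-exponent in $[2,d_j-1]$. This contradicts Proposition~\ref{powers-xj}. However, the proof as proposed is incomplete, because the case $e\ge d_j-1$ is left open and none of your remedies is carried out:
the multiplier $t_0^a t_j^b$ fails, as you note yourself;
the coordinate-change version would need an analogue of Proposition~\ref{powers-xj} for the transformed indicator function, which is not available;
and the congruence $v\equiv\deg(f)\pmod{d_j-1}$ is not justified by anything in the proposal.

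The missing idea is that Lemma~\ref{zero-function}, which you set aside as too weak, is exactly what excludes $e\ge d_j-1$ once it is combined with the definition of $m_j$. It gives $v\ge 1+\sum_{i\ne j}(d_i-1)$. Since $\deg(f)=m_j(d_j-1)+1+\sum_{i>j}(d_i-1)$ (with $m_1=1$), \eqref{equiv2} yields
\[
e=\deg(f)-v\le m_j(d_j-1)-\sum_{i=0}^{j-1}(d_i-1)\le (d_j-1)-(d_0-1)=d_j-d_0\le d_j-2 .
\]
So your ``main obstacle'' never arises, and your argument closes.

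The paper arranges the same ingredients differently. It first multiplies a putative lower-degree indicator function by a power of $t_j$ to reach degree exactly $\deg(f)-1$, and takes its standard form $g$, which need not have minimal degree. A single multiplication by $t_j$, together with Lemma~\ref{feb24-26-1-bis} and Proposition~\ref{powers-xj}, then shows that every monomial of $g$ has $t_j$-exponent at least $d_j-1$. Hence $g=t_j^{d_j-1}H$, and $t_jH$ is an indicator function of degree $\deg(f)-(d_j-1)\le\sum_{i\ne j}(d_i-1)$, contradicting Lemma~\ref{zero-function}. Your direct treatment of $j=0$ via Lemma~\ref{zero-function} is fine.
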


\begin{proof} 
Let $h$ be an indicator function of $P$ of degree $d$. We argue by
contradiction assuming that $\deg(h)<\deg(f)$. We may assume that 
$d$ is equal to $\deg(f)-1$ by multiplying $h$ by $t_j^{\deg(f)-1-d}$.
By the division algorithm, there is a standard indicator function $g$ of
$P$ of degree $d$, and the remainder ${\rm NF}_\mathcal{G}(t_jg)$ of
$t_jg$ on division by $\mathcal{G}$ is a standard indicator function
of degree $d+1$. Then, by the uniqueness of standard indicators
\cite[Lemma~3.2(e)]{villa2024} and noticing that $f$ is an
indicator function of degree $d+1$, one has that 
${\rm NF}_\mathcal{G}(t_jg) = \mu {\rm NF}_\mathcal{G}(f)$ for some $\mu \in 
K^*$. 

From Proposition \ref{xj-div-f} we know that each monomial of the (standard)
indicator function $g$ is a multiple of $t_j$. If the power of $t_j$ in a 
monomial $M$ of $g$ is in the set $\{1, \ldots, d_j - 2\}$, then 
$t_j M \in \Delta(I_\X)$. Hence, by Lemma~\ref{feb24-26-1-bis}, $t_j
M$ is a monomial 
that appears in 
${\rm NF}_\mathcal{G}(f)$ where the power of $t_j$ in $t_jM$ is in the set 
$\{2, 
\ldots, d_j - 1\}$. Yet, Proposition \ref{powers-xj} says there's no such 
monomial in ${\rm NF}_\mathcal{G}(f)$. Therefore, the power of $t_j$
in each monomial of $g$ is at least $d_j-1$ and we can write
$g=t_j^{d_j-1}H$ for some polynomial $H$ in $S$. 
Clearly $t_j H$ is an indicator function for $[e_j]$. 
Therefore, using the minimality of $m_j$, we obtain: 
\begin{align*} 
\deg&(t_j H) = 1 + \deg(g) - (d_j - 1) = m_j(d_j - 1) + \sum_{i = j + 1}^n 
(d_i 
- 1) 
- (d_j - 1) + 1 \\
&= (m_j - 1) (d_j - 1) + \sum_{i = j + 1}^n (d_i - 1) + 1 \leq \sum_{i = 
1}^{j - 1} (d_i - 1) + \sum_{i = j + 1}^n (d_i - 1) + 1 \\
&\leq \sum_{\substack{i = 0 \\ i \neq j}}^{n}(d_i-1),
\end{align*}
where we let $m_0 = 0$ if $j = 0$ and $m_1 = 1$ if $j = 1$. Note that
the
minimality of $m_j$ is used in the first inequality. 
Since $t_j H$ vanishes on $\X \setminus \{ [e_j]\}$, from Lemma 
\ref{zero-function}, we have $t_j H([e_j]) = 0$, a contradiction.
This proves that no indicator function for $[e_j]$ can have a degree less 
than $\deg(f)$. By \cite[Lemma~3.2(b)]{villa2024}, ${\rm
v}_{P}(I_\X)$  is the least degree of an indicator function of $P$.
Thus, ${\rm v}_P(I_\X)=\deg(f)$. The formula for $\deg(f)$ follows
from Proposition~\ref{ind-funct}.
\end{proof}

\begin{theorem}\label{cicero-maria-vila-reg} 
Let $\X=[K_0\times\cdots\times K_n]\subset\mathbb{P}^n$ be a
projective nested product of fields, let $d_i=|K_i|$, and let 
${\rm reg}(\delta_\X)$ be the regularity index of $\delta_X$. The following
hold.
\begin{enumerate}
\item[\rm(a)] ${\rm reg}(\delta_\X)=m_n(d_n - 1)+1$, where
$m_n=\min\{m : m\in\mathbb{N},\ m(d_n-1)>\sum_{i=1}^{n-1}(d_i-1)\}$. 
\item[\rm(b)] ${\rm v}_P(I_\X)\in\{{\rm v}_{e_0}(I_\X),\ldots,{\rm
v}_{e_n}(I_\X)\}$
for all $P\in \X$, where $e_j$ is the $j$-th unit vector of
$\mathbb{P}^n$.
\item[\rm(c)] $1+\sum_{i=1}^n(d_i-1)={\rm reg}(H_\X)={\rm
v}_{e_0}(I_\X)\geq\cdots\geq{\rm
v}_{e_n}(I_\X)=m_n(d_n-1)+1={\rm reg}(\delta_\X).
$
\end{enumerate}
\end{theorem}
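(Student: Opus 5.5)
The three parts will be assembled from Theorems~\ref{p-ej} and \ref{cicero-maria-vila-ei} and Lemma~\ref{min-deg}. No new estimates are needed.

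\textbf{Part (b).} Take $P=(a_0:\cdots:a_n)\in\X$ in standard form, and let $j$ be the least index with $a_j\neq 0$, so $a_j=1$. Then $P=[e_j+\sum_{i>j}a_ie_i]$ with $a_i\in K_i$. If all $a_i=0$ for $i>j$ (in particular if $j=n$), then $P=[e_j]$. Otherwise Theorem~\ref{p-ej} gives ${\rm v}_P(I_\X)={\rm v}_{e_j}(I_\X)$. In either case ${\rm v}_P(I_\X)\in\{{\rm v}_{e_0}(I_\X),\ldots,{\rm v}_{e_n}(I_\X)\}$.

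\textbf{Part (c).} Theorem~\ref{cicero-maria-vila-ei} gives explicit values for ${\rm v}_{e_j}(I_\X)$. For $j=0,1$ the value is $1+\sum_{i=1}^n(d_i-1)$, which equals ${\rm reg}(H_\X)$ by Lemma~\ref{lemma1.1}. For $j\ge 2$ the value is $m_j(d_j-1)+1+\sum_{i=j+1}^n(d_i-1)$, and for $j=n$ it is $m_n(d_n-1)+1$.

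The chain of inequalities ${\rm v}_{e_0}\ge\cdots\ge{\rm v}_{e_n}$ is inequality \eqref{apr16-26-1} from the proof of Lemma~\ref{min-deg}, applied for $j=2,\ldots,n-1$. That proof's argument uses only \eqref{equiv}, \eqref{equiv2} and $d_j-1\mid d_{j+1}-1$, so it also runs for $j=1$ with $m_1=1$. The remaining step ${\rm v}_{e_1}\ge{\rm v}_{e_2}$ therefore follows in the same way; alternatively, it follows from the case $j\in\{0,1\}$ bound $m_n(d_n-1)+1\le\sum(d_i-1)+1$ argued as in that lemma. The step ${\rm v}_{e_0}={\rm v}_{e_1}$ is immediate, since both equal $1+\sum_{i=1}^n(d_i-1)$.

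\textbf{Part (a).} Combine (b) and (c) with the identity ${\rm reg}(\delta_\X)={\rm v}(I_\X)=\min_{P\in\X}{\rm v}_P(I_\X)$ from \cite[Corollary~5.6]{min-dis-generalized}. By (b) this minimum is attained among the ${\rm v}_{e_j}(I_\X)$. By the monotonicity in (c) it equals ${\rm v}_{e_n}(I_\X)=m_n(d_n-1)+1$, so ${\rm reg}(\delta_\X)=m_n(d_n-1)+1$, which also completes the last equality in (c).

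The only point needing care is the monotonicity at the low indices $j=0,1,2$. These are not literally covered by the range $j\in\{2,\ldots,n-1\}$ in the proof of Lemma~\ref{min-deg}, so the convention $m_1=1$ has to be checked to make \eqref{apr16-26-1} valid at $j=1$. This is routine once one notes that \eqref{equiv2} holds for $m_1=1$.
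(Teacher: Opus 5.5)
Your proposal is correct and follows essentially the same route as the paper: (b) comes from Theorem~\ref{p-ej}, the values and monotonicity of ${\rm v}_{e_j}(I_\X)$ come from Theorem~\ref{cicero-maria-vila-ei} and inequality \eqref{apr16-26-1}, and (a) comes from ${\rm reg}(\delta_\X)={\rm v}(I_\X)=\min_P{\rm v}_P(I_\X)$. The differences are minor: you get ${\rm v}_{e_0}(I_\X)={\rm reg}(H_\X)$ directly from Theorem~\ref{cicero-maria-vila-ei} and Lemma~\ref{lemma1.1} rather than via $\max_P{\rm v}_P(I_\X)={\rm reg}(H_\X)$ from \cite[Lemma~3.2(d)]{villa2024}, and you explicitly verify the $j=1$ step of the monotonicity using $m_1=1$, which the paper's proof passes over.
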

\begin{proof}
(a) For each $P\in\X$, let $f_P$ be the indicator function that
appears in Proposition~\ref{ind-funct}. By Lemma~\ref{min-deg}, one
has 
$$
\min\{\deg(f_P): P\in \X\}=m_n(d_n-1)+1,
$$
and by Theorems~\ref{p-ej} and \ref{cicero-maria-vila-ei}, one has
\begin{equation}\label{apr16-26}
{\rm v}_P(I_\X)={\rm v}_{e_j}(I_\X)\ \mbox{ and }\ {\rm
v}_{e_j}(I_\X)=\deg(f_{e_j})=\deg(f_P)
\end{equation}
where $j \in \{0, \ldots, n\}$ is the least integer such 
that the $j$-th entry of $P$ is nonzero. Therefore, 
\begin{equation}\label{apr16-26-2}
{\rm reg}(\delta_\X)={\rm v}(I_\X)=\min\{{\rm v}_P(I_\X): P\in\X\}
=\min\{\deg(f_P): P\in\X\}=m_n(d_n-1)+1.
\end{equation}
\quad (b) This follows from Theorem~\ref{p-ej}, see
Eq.~\eqref{apr16-26}. 

(c) The first equality follows from Lemma~\ref{lemma1.1}. By
Eq.~\eqref{apr16-26}, we get ${\rm v}_{e_j}(I_\X)=\deg(f_{e_j})$ for 
$j=0,\ldots,n$. Then, the
inequalities ${\rm
v}_{e_0}(I_\X)\geq\cdots\geq{\rm
v}_{e_n}(I_\X)$ follow from Eq.~\eqref{apr16-26-1}, in the proof of
Lemma~\ref{min-deg}, because Eq.~\eqref{apr16-26-1} tells us that
$\deg(f_{e_{j+1}})\leq\deg(f_{e_j})$ for $j=0,\ldots,n-1$. By 
Eq.~\eqref{apr16-26}, we get
$$
\max\{{\rm v}_P(I_\X): P\in\X\}
=\max\{\deg(f_{e_j}): j=0,\ldots,n\}={\rm v}_{e_0}(I_\X).
$$ 
\quad From \cite[Lemma~3.2(d)]{villa2024}, one has that $\max\{{\rm
v}_P(I_\X): P\in \X\}={\rm reg}(H_\X)$. Thus, the second equality holds. The
last two equalities follow from Eqs.~\eqref{apr16-26} and \eqref{apr16-26-2}.   
\end{proof}

\begin{corollary}\label{coro1} 
Let $\X=[K_0\times\cdots\times K_n]\subset\mathbb{P}^n$ be a
projective nested product of fields, let $d_i=|K_i|$. The
following hold. 
\begin{enumerate}
\item[\rm(a)] $\X$ is Cayley--Bacharach if and only if 
$\sum_{i=1}^n(d_i-1)+1=m_n(d_n-1)+1$. 
\item[\rm(b)] \cite[Theorem~1]{sorensen} $\X=\mathbb{P}^n$ is
Cayley--Bacharach and ${\rm reg}(\delta_\X)={\rm reg}(H_\X)=n(q-1)+1$.
\end{enumerate}
\end{corollary}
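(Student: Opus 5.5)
For part (a), I will use the definition of Cayley--Bacharach recalled in the introduction: $\X$ is Cayley--Bacharach exactly when ${\rm v}_P(I_\X)={\rm reg}(H_\X)$ for every $P\in\X$. By Theorem~\ref{cicero-maria-vila-reg}(b), each ${\rm v}_P(I_\X)$ lies in $\{{\rm v}_{e_0}(I_\X),\ldots,{\rm v}_{e_n}(I_\X)\}$. Moreover, each $e_j$ is itself a point of $\X$. So the condition reduces to ${\rm v}_{e_j}(I_\X)={\rm reg}(H_\X)$ for all $j$.

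Theorem~\ref{cicero-maria-vila-reg}(c) gives the chain ${\rm reg}(H_\X)={\rm v}_{e_0}(I_\X)\geq\cdots\geq{\rm v}_{e_n}(I_\X)=m_n(d_n-1)+1$. All terms of this chain equal ${\rm reg}(H_\X)$ if and only if the smallest one does. That is, ${\rm reg}(H_\X)=m_n(d_n-1)+1$. Substituting ${\rm reg}(H_\X)=\sum_{i=1}^n(d_i-1)+1$ from Lemma~\ref{lemma1.1} gives exactly the stated criterion.

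For part (b), I take $K_i=\mathbb{F}_q$ for all $i$. Then $\X$ is all of $\mathbb{P}^n$, since every point has a representative with entries in $\mathbb{F}_q$. Now $d_i=q$, so $\sum_{i=1}^{n-1}(d_i-1)=(n-1)(q-1)$. The least $m$ with $m(q-1)>(n-1)(q-1)$ is $m_n=n$, hence $m_n(d_n-1)+1=n(q-1)+1=\sum_{i=1}^n(d_i-1)+1$. By part (a), $\X$ is Cayley--Bacharach. By Theorem~\ref{cicero-maria-vila-reg}(a),(c), ${\rm reg}(\delta_\X)={\rm reg}(H_\X)=n(q-1)+1$.

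There is no real obstacle here. The only point needing care is the reduction in part (a) from all points $P$ to the unit vectors $e_j$. Both directions are needed: $e_j\in\X$ for the ``only if'' direction, and Theorem~\ref{cicero-maria-vila-reg}(b) for the ``if'' direction.
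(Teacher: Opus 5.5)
Your proof is correct and follows essentially the same route as the paper. The paper notes that ${\rm reg}(\delta_\X)=\min_P{\rm v}_P(I_\X)$ and ${\rm reg}(H_\X)=\max_P{\rm v}_P(I_\X)$, so $\X$ is Cayley--Bacharach iff these two agree, and reads off the criterion from Theorem~\ref{cicero-maria-vila-reg}(c); you reach the same comparison of the ends of that chain by reducing to the points $e_j$ via Theorem~\ref{cicero-maria-vila-reg}(b), and your part (b) (computing $m_n=n$ and invoking (a)) matches the paper's.
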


\begin{proof}
(a) Since ${\rm reg}(\delta_\X)=\min\{{\rm v}_P(I_\X) : P\in \X\}$ 
and ${\rm reg}(H_\X)=\max\{{\rm
v}_P(I_\X) : P\in \X\}$, one has that $\X$ is Cayley--Bacharach 
if and only if ${\rm reg}(\delta_\X)={\rm reg}(H_\X)$, and the result
follows from Theorem~\ref{cicero-maria-vila-reg}(c).

(b) As $d_i=q$ for all $i$ and 
$m_n=\min\{m : m\in\mathbb{N},\ m(q-1)>(q-1)(n-1)\}=n$, 
by part (a) and its proof, $\X$ is Cayley--Bacharach and ${\rm
reg}(\delta_\X)={\rm reg}(H_\X)=n(q-1)+1$.  
\end{proof}

\section{Examples}

\begin{example}\label{ex2}
Let $K_0\subset K_1\subset K_5=K$ be a tower of subfields of a finite
field $K=\mathbb{F}_q$ and let $\X=[K_0\times\cdots\times K_5]$ be a
projective nested product 
of fields with $d_i=|K_i|$, $i=0,\ldots,5$. 
For convenience of notation we say that $\X$ is the 
projective nested product 
of fields with defining sequence
$\mathbf{d}=(d_0,d_1,d_2,d_3,d_4,d_5)$ and denote the local v-number
${\rm v}_{e_j}(I_\X)$ by ${\rm v}_{e_j}$ for
$j=0,\ldots,5$. Using Eq.~\eqref{apr19-26} and
Theorems~\ref{cicero-maria-vila-ei}--\ref{cicero-maria-vila-reg}, we obtain Table~\ref{vei}
that in particular gives all possible values of ${\rm v}_P(I_\X)$ for $P\in\X$.
One can use Procedure~\ref{procedure1} for \textit{Macaulay}$2$ \cite{mac2} 
to verify the values of this table.
\begin{table}[H] 
\hspace{5cm}
\begin{center}
\begin{tabular}{c|c|c|c|c|c|c|c|c|c}
$(d_0,d_1,d_2,d_3,d_4,d_5)$& ${\rm v}_{e_0}$& ${\rm
v}_{e_1}$ & ${\rm v}_{e_2}$ & ${\rm v}_{e_3}$& ${\rm
v}_{e_4}$ & ${\rm v}_{e_5}$ & $|\X|$ & ${\rm reg}(H_\X)$ & ${\rm
reg}(\delta_\X)$
\\ \hline $(2,2,4,4,16,16)$ & 38 & 38 & 37 & 37 &  31&
31&13585& 38 & 31
\\ \hline $(2,2,2,4,4,4)$ & 12 & 12& 12 & 10 &  10&10& 469 & 12 & 10
\\ \hline $(2,2,2,2,4,4)$ & 10 & 10& 10 & 10 &  10&10&245 & 10 & 10
\\ \hline $(2,2,2,2,2,4)$ & 8 & 8& 8 & 8 &  8&7&125 & 8 &7
\\ \hline \vspace{-4mm} 
\\ 
$(2,2,2^2,2^4,2^8,2^8)$ & 530 & 530& 529& 526 &  511&511& 13697281&
530 & 511 
\\ \hline \vspace{-4mm} 
\\ 
$(3,3,3,3,3^2,3^4)$ & 95 & 95& 95& 95 &  89&81& 29242&
95 & 81 
\end{tabular}
\end{center}
\caption{All possible values of ${\rm v}_P(I_\X)$ for $P\in\X$.}\label{vei} 
\end{table}
\end{example}

\begin{example}\label{ex1}
We recall an example devised by R.\ Villarreal which appeared in
\cite[Example~3.2]{cnl2017}.
We let $K = \mathbb{F}_4$, $K_0 = K_1 = \mathbb{F}_2$, $K_2 = 
\mathbb{F}_4$, $d_0=d_1=2$, $d_2=4$, and $\X=[K_0\times K_1\times
K_2]$. Table~\ref{mdf} gives the values of the minimum distance 
$\delta_{\mathcal{X}}(d)$ of $C_{\mathcal{X}}(d)$, as $d$ varies from
$1$ to $4$. 		
\begin{table}[H]
\begin{center}
\begin{tabular}{c|c|c|c|c}
$d$ & 1 & 2 & 3 & 4 \\ \hline 
$\delta_{\mathcal{X}}(d)$ & 8 & 4 & 3 & 1 
\end{tabular}
\end{center}
\caption{The minimum distance function $\delta_\X$.}\label{mdf}
\end{table}

Thus, $\mathrm{reg}(\delta_{\mathcal{X}}) = 4$, while the regularity
of $H_\mathcal{X}$, by Lemma \ref{lemma1.1}, is  $\sum_{i = 1 }^2 (d_i - 1) + 1 = 
5$, so this is an example 
where $\mathrm{reg}(\delta_{\mathcal{X}})  <
\mathrm{reg}(H_\mathcal{X})$. Furthermore, the defining sequence of
$\X$ is $(2,2,4)$ and the local v-numbers are given in
Table~\ref{vila-ex}. 
\begin{table}[H]
\begin{center}
\begin{tabular}{c|c|c|c|c|c|c}
$(d_0,d_1,d_2)$&${\rm v}_{e_0}$ & ${\rm v}_{e_1}$ & ${\rm v}_{e_2}$ &
$|\X|$ & ${\rm reg}(H_\X)$ & ${\rm 
reg}(\delta_\X)$
\\ \hline 
$(2,2,4)$ & 5& 5 & 4 & 13 &5 & 4
\end{tabular}
\end{center}
\caption{All possible local v-numbers of points in $\X$.}\label{vila-ex}
\end{table}

Note that the set of leading monomials
of the Gr\"obner 
basis 
$$\mathcal{G}
=\{t_1^2t_0+t_1t_0^2,t_2^4t_0+t_2t_0^4,t_2^4t_1+t_2t_1^4\}
$$ 
given in Eq.~\eqref{basedeG} is $\{ t_0 t_1^2, t_0 t_2^4,$ $t_1 t_2^4\}$. 
Thus, a basis for $\mathbb{F}_4[t_0, t_1, t_2]_4/I(\mathcal{X})_4$ is 
\begin{equation*} 
\begin{split} 
\{t_0^4, t_0^3 t_1, t_0^3 t_2, t_0^2 t_2^2, &t_0 t_2^3, t_1^4, t_1^3 t_2, \\ 
&t_1^2 
t_2^2, t_1 t_2^3, t_2^4, t_0 t_1 t_2^2, t_0^2 t_1 t_2\}
\end{split} 
\end{equation*}
which has 12 elements and a basis 
for $\mathbb{F}_d[t_0, t_1, t_2]_d/I(\mathcal{X})_d$, when $d \geq 5$ always 
has 13 elements because ${\rm reg}(H_\X)=5$ and consequently
$H_{\X}(d)=|\X|=13$ for all $d\geq 5$. 

Let $a$ be a generator of the multiplicative cyclic group
$\mathbb{F}_4^*=\mathbb{F}_{4}\setminus\{0\}$ of $\mathbb{F}_4$.
Using \textit{Macaulay}$2$ \cite{mac2} and adapting Procedure~\ref{procedure1}, we obtain that the unique standard
indicator function $f_{e_j}$ of $e_j$ of degree ${\rm v}_{e_j}(I_\X)$
for $j=0,1,2$ is given by 
\begin{align*}
f_{e_0}&=  (t_0)(t_1+t_0)(t_2+t_0)(t_2+at_0)(t_2+(a+1)t_0)\\ 
       &=t_2^3t_1t_0+t_2^3t_0^2+t_1t_0^4+t_0^5; \\
f_{e_1}&=
(t_1)(t_1+t_0)(t_2+t_1+t_0)(t_2+at_1+at_0)(t_2+(a+1)t_1+(a+1)t_0)\\ 
       &=t_2^3t_1^2+t_2^3t_1t_0+t_1^5+t_1t_0^4;\\
f_{e_2}&= (t_2)(t_2^3+t_1^3+t_1t_0^2+t_0^3)\\
       &=t_2^4+t_2t_1^3+t_2t_1t_0^2+t_2t_0^3. 
\end{align*}
\quad Note that $f_{e_0}$ and $f_{e_2}$ are the indicator functions 
of Proposition~\ref{ind-funct} that correspond to $e_0$ and $e_2$. The
indicator function of Proposition~\ref{ind-funct} that
correspond to $e_1$ is 
$$f=t_1(t_1-t_0)(t_2^3-t_1^3)=t_1^2t_2^3-t_1^5-t_0t_1t_2^3+t_0t_1^4
$$ 
which is not standard since $t_0t_1^4$ appears in $f$. The remainder
of $t_0t_1^4$ on division by the binomial
$t_0t_1^2-t_0^2t_1\in\mathcal{G}$ is $t_0^4t_1$. Thus, ${\rm
NF}_\mathcal{G}(f)=f_{e_1}$.

\end{example}

\begin{appendix}

\section{Finding all possible minimal degrees of indicator 
functions}\label{procedure-degrees}
We give a procedure for \textit{Macaulay}$2$
\cite{mac2}, based on Theorem~\ref{cicero-maria-vila-reg}, to
determine all possible values of ${\rm v}_P(I_\X)$ for $P\in\X$. 

\begin{procedure}\label{procedure1}\rm 
The following procedure for \textit{Macaulay}$2$ \cite{mac2}
computes all possible values of the local v-numbers ${\rm v}_P(I_\X)$ for $P\in\X$. This
procedure determines the unique standard indicator function
$f_{e_j}$ of degree ${\rm v}_{e_j}(I_\X)$ for each
$e_j\in\mathbb{P}^n$, where $e_j$ is the $j$-th unit vector.
The input is the sequence $\mathbf{d}=(d_0,\ldots,d_n)$ defining $\X$,
the finite field $\mathbb{F}_q$, and the vanishing ideals $I_{e_j}$
of $e_j$ for $j=0,\ldots,n$. 
This procedure corresponds to the sequence $(2,2,2^2,2^4,2^8,2^8)$ and
the finite field $K=\mathbb{F}_{2^8}$
of Table~\ref{vei} in Example~\ref{ex2}. 
\begin{verbatim}
restart
p=2,s=8, q=p^s
--Finite field
K=GF(q,Variable=>a)
S=K[t5,t4,t3,t2,t1,t0,MonomialOrder=>GLex]
--Defining sequence of X
d0=2,d1=2,d2=2^2,d3=2^4,d4=2^8,d5=2^8
--Vanishing ideal I of X
I=ideal(t0*t1^d1-t0^d1*t1, t0*t2^d2-t0^d2*t2, t0*t3^d3-t0^d3*t3,
t0*t4^d4-t0^d4*t4, t0*t5^d5-t0^d5*t5, t1*t2^d2-t1^d2*t2,
t1*t3^d3-t1^d3*t3, t1*t4^d4-t1^d4*t4, t1*t5^d5-t1^d5*t5,
t2*t3^d3-t2^d3*t3, t2*t4^d4-t2^d4*t4, t2*t5^d5-t2^d5*t5,
t3*t4^d4-t3^d4*t4, t3*t5^d5-t3^d5*t5, t4*t5^d5-t4^d5*t5)
--Number of elements of X
degree I
--Regularity of the Hilbert function of X
regularity coker gens gb ideal(leadTerm(I))
--The list of vanishing ideals of e0,...,e5
AssI={ideal(t1,t2,t3,t4,t5),ideal(t0,t2,t3,t4,t5),
ideal(t0,t1,t3,t4,t5), ideal(t0,t1,t2,t4,t5),
ideal(t0,t1,t2,t3,t5), ideal(t0,t1,t2,t3,t4)}
--The degree of the unique standard indicator function 
--of degree v_{e_j}(I) 
F=(j)-> degrees mingens (quotient(I,AssI#j)/I)-set{{{0}}}
--The unique standard indicator function of e_j
--of degree v_{e_j}(I)
F1=(j)->(flatten entries mingens (quotient(I,AssI#j)/I))#0
--The list of the standard indicator functions of 
--minimal degree of e0,...,e5, that is, 
--the list v_{e_0}(I),...,v_{e_5}(I)
apply(0..#AssI-1,F1)
--The regularity of the minimum distance function, that is, 
--the v-number of the vanishing ideal of X
vnumber=flatten flatten  min apply(0..#AssI-1,F)
--The list of all possible degrees of the 
--indicator functions of minimal degree 
localvnumbers=flatten flatten apply(0..#AssI-1,F)
\end{verbatim}
\end{procedure}
\end{appendix}

\section*{Acknowledgments.} 
\textit{Macaulay}$2$ \cite{mac2} was used to implement algorithms for
computing all possible values of local v-numbers and standard
indicators functions.






\end{document}